\documentclass[a4paper,USenglish,cleveref,autoref,thm-restate,pdfa]{lipics-v2021}
\hideLIPIcs
\nolinenumbers

\usepackage{tikz}
\usepackage{xspace}
\usepackage{upgreek}
\usepackage{todonotes}
\usepackage{mathtools}

\bibliographystyle{plainurl}% the mandatory bibstyle

\title{Intuitive Analyses via Drift Theory}
 \titlerunning{}
\author{Andreas G\"obel} % Name
    {Hasso Plattner Institute, University of Potsdam, Potsdam, Germany \and \url{https://hpi.de/friedrich/people/andreas-goebel.html}} % Affiliation and homepage
    {andreas.goebel@hpi.de} % e-mail
    {} % ORC-ID
    {} % Funding
\author{Timo K\"otzing} % Name
    {Hasso Plattner Institute, University of Potsdam, Potsdam, Germany \and \url{https://hpi.de/friedrich/people/timo-koetzing.html}} % Affiliation and homepage
    {timo.koetzing@hpi.de} % e-mail
    {} % ORC-ID
    {} % Funding
\author{Martin S. Krejca} % Name
    {LIX, CNRS, Ecole Polytechnique, IP Paris, Paris, France \and \url{https://www.lix.polytechnique.fr/Labo/Martin.KREJCA/}} % Affiliation and homepage
    {martin.krejca@polytechnique.edu} % e-mail
    {https://orcid.org/0000-0002-1765-1219} % ORC-ID
    {} % Funding

\authorrunning{A. G\"obel and T. K\"otzing and M. S. Krejca} %mandatory. First: Use abbreviated first/middle names.
%% Second (only in severe cases): Use first author plus 'et. al.'

\Copyright{Andreas G\"obel and Timo K\"otzing and Martin S. Krejca}%mandatory, please use full first names. OASIcs
% license is "CC-BY";  http://creativecommons.org/licenses/by/3.0/

\ccsdesc[100]{Theory of computation~Random walks and Markov chains}
% Computation".
\keywords{Drift theory; stochastic processes; first-hitting time.}% mandatory: Please provide 1-5 keywords

%Editor-only macros:: begin (do not touch as author)%%%%%%%%%%%%%%%%%%%%%%%%%%%%%%%%%%
\EventEditors{John Q. Open and Joan R. Access}
\EventNoEds{2}
\EventLongTitle{30th Annual European Symposium on Algorithms (ESA 2022)}
\EventShortTitle{ESA 2022}
\EventAcronym{ESA}
\EventYear{2022}
\EventDate{September 5--9, 2022}
\EventLocation{Potsdam, Germany}
\EventLogo{}
\SeriesVolume{42}
\ArticleNo{23}
% Editor-only macros::end %%%%%%%%%%%%%%%%%%%%%%%%%%%%%%%%%%%%%%%%%%%%%%%

\let\originalleft\left
\let\originalright\right
\renewcommand{\left}{\mathopen{}\mathclose\bgroup\originalleft}
\renewcommand{\right}{\aftergroup\egroup\originalright}

% Defines the IfEmptyTF command. This is useful for optional arguments provided as [].
\makeatletter
\def\IfEmptyTF#1{%
	\if\relax\detokenize{#1}\relax
	\expandafter\@firstoftwo
	\else
	\expandafter\@secondoftwo
	\fi}
\makeatother

\DeclareDocumentCommand{\mathOrText}{m}
{%
    \ensuremath{#1}\xspace%
}

%% Number sets
\usepackage{dsfont}
\renewcommand*{\Re}{\mathOrText{\mathds{R}}}
\newcommand*{\Na}{\mathOrText{\mathds{N}}}

%% Template for a function that can use optional variable bracket size
% #1 = function name; #2, #3 = bracket type; #4 = arguments of the function; #5 = bracket size
\DeclareDocumentCommand{\functionTemplate}{m m m m o}
{%
	\IfNoValueTF{#5}%
	{%
		\mathOrText{#1\left#2{#4}\right#3}%
	}%
	{%
		\mathOrText{#1#5#2{#4}#5#3}%
	}%
}
%% Template for a probabilistic symbol; can make use of conditions
% #1 = name; #2 = argument; #3 = condition; #4 = bracket size
\DeclareDocumentCommand{\probabilistcFunction}{m m O{} o}
{%
	\functionTemplate{#1}{[}{]}{#2\IfEmptyTF{#3}{}{\,\IfNoValueTF{#4}{\left}{#4}\vert\,\vphantom{#2}{#3}\IfNoValueTF{#4}{\right.}{}}}[#4]%
}

%% Probabilistic functions
%% Syntax for conditional expressions, e.g.: \Pr{before_condition}[after_condition]
% Probability
\DeclareDocumentCommand{\Pr}{m O{} o}
{%
	\probabilistcFunction{\mathrm{Pr}}{#1}[#2][#3]%
}
% Expected value
\DeclareDocumentCommand{\E}{m O{} o}
{%
	\probabilistcFunction{\mathrm{E}}{#1}[#2][#3]%
}
% Variance
\DeclareDocumentCommand{\Var}{m O{} o}
{%
	\probabilistcFunction{\mathrm{Var}}{#1}[#2][#3]%
}

%% Declarations of various names.
% Big O
\DeclareDocumentCommand{\bigO}{m o}
{%
	\functionTemplate{\mathrm{O}}{(}{)}{#1}[#2]%
}
% Small O
\DeclareDocumentCommand{\smallO}{m o}
{%
	\functionTemplate{\mathrm{o}}{(}{)}{#1}[#2]%
}
% Big Theta
\DeclareDocumentCommand{\bigTheta}{m o}
{%
	\functionTemplate{\Theta}{(}{)}{#1}[#2]%
}
% Big Omega
\DeclareDocumentCommand{\bigOmega}{m o}
{%
	\functionTemplate{\Omega}{(}{)}{#1}[#2]%
}
% Small Omega
\DeclareDocumentCommand{\smallOmega}{m o}
{%
	\functionTemplate{\upomega}{(}{)}{#1}[#2]%
}
% Euler’s constant
\DeclareDocumentCommand{\eulerE}{o}
{%
    \mathOrText{\mathrm{e}\IfNoValueTF{#1}{}{^{#1}}}%
}
% Pi; ratio of a circle’s circumference to its diameter

% Polynomial
\DeclareDocumentCommand{\poly}{m o}
{%
	\functionTemplate{\mathrm{poly}}{(}{)}{#1}[#2]%
}

% Commands for Finite State Spaces
\newcommand*{\pplus}{\mathOrText{p^{\scriptscriptstyle\leftarrow}}}
\newcommand*{\pminus}{\mathOrText{p^{\scriptscriptstyle\rightarrow}}}

% The random variable denoting a martingale
\DeclareDocumentCommand{\martingale}{o}
{%
    \mathOrText{X\IfNoValueTF{#1}{}{_{#1}}}%
}
% A discrete point in time
\newcommand*{\timePoint}{\mathOrText{t}}
% The first hitting time of a random process
\newcommand*{\firstHittingTime}{\mathOrText{T}}
% The lower border of a martingale

% The upper border or a martingale

% The constant for the additive drift

% An index variable

% The random variable denoting the transformation of the martingale
\DeclareDocumentCommand{\transformedProcess}{o}
{%
    \mathOrText{Y\IfNoValueTF{#1}{}{_{#1}}}%
}

\begin{document}

\maketitle

\begin{abstract}
%Humans are bad with probabilities, and the analysis of randomized algorithms offers many pitfalls for the human mind.
\emph{Drift theory} is an intuitive tool for reasoning about random processes: It allows turning expected stepwise changes into expected first-hitting times. While drift theory is used extensively by the community studying randomized search heuristics, it has seen hardly any applications outside of this field, in spite of many research questions that can be formulated as first-hitting times.

We state the most useful drift theorems and demonstrate their use for various randomized processes, including the coupon collector process, winning streaks, approximating vertex cover, and a random sorting algorithm. We also consider processes without expected stepwise change and give theorems based on drift theory applicable in such scenarios. We use these theorems for the analysis of the gambler's ruin process, for a coloring algorithm, for an algorithm for 2-SAT, and for a version of the Moran process without bias. A final tool we present is a tight theorem for processes on finite state spaces, which we apply to the Moran process.

We aim to enable the reader to apply drift theory in their own research to derive accessible proofs and to teach it as a simple tool for the analysis of random processes.
\end{abstract}

\section{Drift Theory}
\label{sec:introduction}

%
%global TODO-list
%\begin{itemize}
%	\item we use the Oxford comma
%	\item we use American English
%	\item check for ``run time'', not ``running time'' or ``runtime'' or ``run-time''.
%	\item Check for hyphen in ``one-barrier'' and ``two-barrier''.
%\end{itemize}

Suppose that you win a million dollars in a lottery and that you start spending your winnings.
You observe that you spend on average $\$\,10\,\textrm{k}$ per day. How long will your lottery winnings last?
Intuitively, you would divide the million you won by $\$\,10\,\textrm{k}$ and estimate that your winnings would last for $100$ days.
But that feels like confusing a random process with a deterministic one.
Well, yes, but the good news is: There is a theorem that tells us that 100 days is the mathematically precise answer, even when the process is randomized.
Even better, if you gain money on some days (say, by playing in a casino) but still, in expectation, your balance goes down by $\$\,10\,\textrm{k}$ per day, the conclusion still holds.
There can even be dependencies between the earnings/spendings of different days.
The theorem showing that this is the case is called the \emph{additive drift theorem} (see \Cref{thm:addDrift}).
The term \emph{drift} refers to the expected change of the stochastic process, and the term \emph{additive} refers to the requirement that two successive values of the process differ, in expectation, by an additive constant.

A similar setting to that of the process described above is the well-known \emph{coupon collector} process.
Suppose there are $n \in \Na_{\geq 1}$ collectible kinds of coupons, of which you would like a complete set.
Each iteration $t \in \Na$, you receive a uniformly random kind of coupon.
How long does it take until you have a complete set?
Note that if you still miss $X_t$ coupons from the complete collection after $t$ iterations, you have a chance of $X_t/n$ of getting a new one.
Thus, the expected gain is $X_t/n$.
This is a multiplicative expected progress and the \emph{multiplicative drift theorem} (see \Cref{thm:multiDrift}) gives an upper bound of $\bigO{n \log n}$.
Again, this theorem also holds when there is a possibility of losing coupons, and it even gives a concentration bound.
This brings us to the following description of drift theory:

\begin{center}
\begin{tikzpicture}
\node[rectangle,rounded corners, draw=black!50,fill=black!8, inner sep=4mm] at (0,0) {\parbox{10cm}{\textbf{Drift theory} is a collection of theorems to turn iteration-wise expected gains into expected first-hitting times.}};
\end{tikzpicture}
\end{center}

The first drift theorem, the \emph{additive} drift theorem, was introduced by He and Yao~\cite{DBLP:journals/ai/HeY01}, based on an intricate theorem by Hajek~\cite{hajek1982hitting}.
He and Yao applied their theorem in the context of analyzing randomized search heuristics (RSHs), such as evolutionary algorithms (EAs), which work by the principle of variation (mutating solutions by random changes) and selection (accepting improvements and rejecting worsenings). Drift theory gained a lot of traction in the EA theory community after the \emph{multiplicative} drift theorem was introduced by Doerr, Johannsen, and Winzen~\cite{DBLP:conf/gecco/DoerrJW10}.
Their proof used additive drift, but a proof not relying on Hajek's result was given shortly after by Doerr and Goldberg~\cite{DBLP:conf/ppsn/DoerrG10a}.
Since then, drift theory has been the dominant method for formally analyzing RSHs, easing their analysis.
For example, the main result of Droste~\cite{DBLP:conf/gecco/Droste04} on noisy optimization, spanning an entire paper, was reproven with drift in a more generalized fashion by Giessen and Kötzing~\cite{DBLP:journals/algorithmica/GiessenK16} on a single page.
There are drift theorems for analyzing processes with neither additive nor multiplicative drift (\emph{variable} drift), processes with drift in the wrong direction (\emph{negative} drift), as well as concentration bounds.
Lengler~\cite{Lengler20DriftBookChapter} provides an extensive overview, including applications to the analysis of RSHs.
Kötzing and Krejca~\cite{KoetzingK19GeneralDriftTheorems} provide self-contained proofs of the most commonly used drift theorems in the up-to-date most general version, via martingale theory.

In this paper, we discuss drift theorems formally in \Cref{sec:driftTheorems} and then show that drift theory simplifies proofs of many classical results in the analysis of randomized algorithms and stochastic processes.
Any researcher working in the field can probably make use of drift theorems since many questions concerning stochastic processes can be formulated as first-hitting times of a real-valued process. Frequently, drift theory is applied to processes that are not real-valued by mapping the states of the process to real values (using a potential function). However, in spite of the versatility of drift theorems, there are only very few results outside of the theory of RSHs applying drift theory \cite{bertschinger_et_al:LIPIcs:2020:12883,DBLP:journals/corr/abs-1804-02293,10.1145/3212734.3212788}.

In \Cref{sec:simpleDrift}, we discuss six theorems where we easily uncover a drift and straightforwardly apply drift theorems. We start with two versions of the coupon collector process mentioned earlier, where the second is a generalization that uses the same proof as the special case. Both proofs employ the multiplicative drift theorem. Also in this section, we show how the expected value of a geometric distribution can be found via drift, followed by an analysis of the waiting time for a winning streak of a certain length (this proof employs an elaborate potential function). Finally, we consider two simple randomized algorithms, one to find a 2-approximation for vertex cover by picking a random vertex of an uncovered edge, and one for sorting, swapping two randomly chosen elements if these elements are not in order.

In \Cref{sec:generalCorollaries}, we give two theorems applicable for stochastic processes that do not exhibit drift (i.e., have an expected change of $0$), such as martingales, where a direct drift analysis is not applicable.
We show how to transform such processes to derive first-hitting time bounds. Also in this section, we cite a very precise theorem on processes in finite state spaces.

We apply these theorems in \Cref{sec:corollaryApplications} to bound the first-hitting time of the gambler's ruin process, the classical process with expected change of~$0$, where each iteration sees either an increase or a decrease of $1$, each with probability $1/2$. Further, we bound the expected run time of two randomized algorithms: one for finding colorings of a graph and one for finding satisfying assignments of a 2-SAT formula. Last, we consider a version of the \emph{Moran process}, a stochastic process that arises in biology and models the spread of genetic mutations in populations (see \Cref{sec:moran} for details). The analysis of the Moran processes is one of the rare cases where drift theory was used previously outside of the analysis of RSHs~\cite{DGMRSS14}.

We conclude our paper in \Cref{sec:conclusions}. Overall, we consider drift theory very intuitive and widely applicable. With this paper, we want to put it as an option into the toolbox of the reader, and to show its accessibility, which also makes it a great tool to teach to students.

\section{The Classic Drift Theorems}
\label{sec:driftTheorems}

% The random variable denoting a random process
\DeclareDocumentCommand{\randomProcess}{o}
{%
    \mathOrText{X\IfNoValueTF{#1}{}{_{#1}}}%
}

We state the three most commonly used drift theorems in order of increasing complexity.
Since the theorems are rather technical and we aim for simplicity, we make certain simplifications throughout this article, which we briefly explain in the following.
For a more formal explanation of our simplifications, please also refer to \Cref{sec:appendix}.

\subparagraph{Simplifications}
At its core, drift theory is a reformulation of the \emph{optional-stopping theorem} for martingales.
Martingales are a very powerful tool for analyzing random processes, but they require knowledge about advanced concepts from probability theory, such as filtrations, stopping times, or expected values conditional on a $\upsigma$-algebra.
In this article, we only consider (discrete) Markov chains (that is, processes where the transition \emph{exclusively} depends on the current state and time point), define specific first-hitting times, and formulate the drift condition using expected values conditional on an event, not on a $\upsigma$-algebra.
None of these assumptions are necessary but simplify the statement of the theorems.
More general versions of drift theorems are stated by Kötzing and Krejca~\cite{KoetzingK19GeneralDriftTheorems}.

\subparagraph{Drift Theorems}
The first drift theorem is the additive drift theorem, which requires a uniform bound for the expected change of a process.

\begin{theorem}[\textrm{Additive drift~\cite{DBLP:journals/ai/HeY01, DBLP:journals/nc/HeY04}}]
    \label{thm:addDrift}
    Let $(\randomProcess[\timePoint])_{\timePoint \in \Na}$ be an integrable Markov chain over $S \subset \Re$, and let $T = \inf\{\timePoint \in \Na \mid \randomProcess[\timePoint] \leq 0\}$.
    \begin{enumerate}
        \item\label{item:addDriftUpperBound} Assume that there is a $\delta \in \Re_{> 0}$ such that, for all $s \in S \cap \Re_{> 0}$ and all $\timePoint < \firstHittingTime$, it holds that

                $\E{\randomProcess[\timePoint] - \randomProcess[\timePoint + 1]}[\randomProcess[\timePoint] = s] \geq \delta\ ,$ and that,
            for all $\timePoint < \firstHittingTime$, we have $\randomProcess[\timePoint] \geq 0\ .$
            Then
            \[
                \E{\firstHittingTime} \leq \frac{\E{\randomProcess[0]}}{\delta}\ .
            \]

        \item\label{item:addDriftLowerBound} Assume that there is a $\delta \in \Re_{> 0}$ such that, for all $s \in S \cap \Re_{> 0}$ and all $\timePoint  < \firstHittingTime$, it holds that $\E{\randomProcess[\timePoint] - \randomProcess[\timePoint + 1]}[\randomProcess[\timePoint] = s] \leq \delta\ ,$ and that
            there is a $c \in \Re_{\geq 0}$ such that, for all $s \in S \cap \Re_{> 0}$ and all $\timePoint  < \firstHittingTime$, it holds that $\E{|\randomProcess[\timePoint] - \randomProcess[\timePoint + 1]|}[\randomProcess[\timePoint] = s][\big] \leq c\ .$
            Then
            \[
                \E{\firstHittingTime} \geq \frac{\E{\randomProcess[0]}}{\delta}\ .
            \]
    \end{enumerate}
\end{theorem}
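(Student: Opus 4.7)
The plan is to analyze the stopped process $X_{t\wedge T}$ and convert the one-step drift hypothesis into a cumulative bound on $\mathbb{E}[X_{t\wedge T}]$ via a telescoping argument, then pass to the limit $t\to\infty$.

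Concretely, I would first write
\[
\mathbb{E}[X_{t\wedge T}] - \mathbb{E}[X_0] \;=\; \sum_{s=0}^{t-1} \mathbb{E}\bigl[(X_{s+1}-X_s)\mathbf{1}\{T>s\}\bigr],
\]
observe that on $\{T>s\}$ we have $X_s>0$ so the drift hypothesis applies to each summand via the tower property, and combine the resulting per-step bound with the identity $\sum_{s=0}^{t-1}\Pr[T>s] = \mathbb{E}[t\wedge T]$ together with $\mathbb{E}[t\wedge T]\to\mathbb{E}[T]$ (monotone convergence). For Part~\ref{item:addDriftUpperBound} this yields $\mathbb{E}[X_{t\wedge T}] \leq \mathbb{E}[X_0] - \delta\,\mathbb{E}[t\wedge T]$; combined with $X_{t\wedge T}\geq 0$ and $t\to\infty$, this gives $\mathbb{E}[T]\leq \mathbb{E}[X_0]/\delta$.

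For Part~\ref{item:addDriftLowerBound}, the same telescoping produces the symmetric inequality $\mathbb{E}[X_{t\wedge T}] \geq \mathbb{E}[X_0] - \delta\,\mathbb{E}[t\wedge T]$. Now I need an \emph{upper} bound on $\lim_{t\to\infty}\mathbb{E}[X_{t\wedge T}]$. Assuming $\mathbb{E}[T]<\infty$ (otherwise the claim is vacuous), the bounded-step hypothesis dominates $|X_{t\wedge T}-X_0|$ by a telescoping sum of absolute increments whose total expectation is at most $c\,\mathbb{E}[T]<\infty$, so dominated convergence yields $\mathbb{E}[X_{t\wedge T}] \to \mathbb{E}[X_T] \leq 0$. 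Rearranging gives $\mathbb{E}[T]\geq \mathbb{E}[X_0]/\delta$.

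The main obstacle is the lower bound. The upper bound is nearly automatic once the drift is telescoped and nonnegativity is used. For the lower bound, however, one must carefully handle the \emph{overshoot} of $X_T$ past the barrier $0$: without a restriction on step sizes, a single enormous downward jump at the hitting time could drag $\mathbb{E}[X_T]$ arbitrarily far below $0$ and destroy the bound. The $c$-hypothesis is precisely what legitimizes the limit exchange and keeps the boundary contribution at time $T$ from swallowing the accumulated drift.
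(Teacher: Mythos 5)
The paper does not actually prove Theorem~\ref{thm:addDrift}: it imports the result from He and Yao and from K\"otzing and Krejca~\cite{DBLP:conf/ppsn/KotzingK18a}, so there is no in-paper argument to compare against. Your proposal is the standard modern proof of this theorem: stop the process at $T$, telescope the increments against the indicators $\mathbf{1}\{T>u\}$, identify $\sum_{u=0}^{t-1}\Pr{T > u}$ with $\E{t \wedge T}$, then use nonnegativity of $X_{t\wedge T}$ for the upper bound, and dominated convergence together with $X_T\leq 0$ for the lower bound. The structure is correct, and you rightly identify that the bounded-step hypothesis in part~\ref{item:addDriftLowerBound} exists precisely to control the overshoot below $0$ at time $T$ and to legitimize the limit exchange; the Wald-type domination $\mathbb{E}\bigl[\sum_{u<T}|X_{u+1}-X_u|\bigr]\leq c\,\E{T}$ and the observation that the lower bound is vacuous when $\E{T}=\infty$ are both sound.

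One step deserves more care than ``the drift hypothesis applies to each summand via the tower property.'' The event $\{T>u\}$ depends on the entire history $X_0,\dots,X_u$, whereas the hypothesis as stated only controls $\E{X_u - X_{u+1}}[X_u = s]$. Since $\mathbf{1}\{T>u\}$ is not $\sigma(X_u)$-measurable, you cannot in general factor $\mathbb{E}\bigl[(X_u-X_{u+1})\mathbf{1}\{T>u\}\bigr]$ into the conditional drift times $\Pr{T>u}$; you need the drift bound conditioned on $(X_0,\dots,X_u)$ (that is, on a filtration), or a Markov assumption on the process. The paper itself flags that it states a simplified, ``condition only on the previous state'' version and defers the general case to~\cite{DBLP:conf/ppsn/KotzingK18a}, so this is as much a looseness of the theorem statement as of your proof~--- but a complete write-up should either strengthen the conditioning in the hypothesis or state explicitly where the Markov property is invoked. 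With that repaired, the argument goes through as you describe.
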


Note that, for case~\ref{item:addDriftUpperBound}, we cannot allow the process to assume smaller values than the target~$0$. This is demonstrated by the process $(\randomProcess[\timePoint])_{\timePoint \in \Na}$ with $X_0 = 1$ and, for all $t$, with probability $1-1/n$, $X_{t+1} = X_t$ and otherwise $X_{t+1} = -n+1$; even the expected time until $X_t \leq 0$ is not correctly bounded.

For case~\ref{item:addDriftLowerBound}, the expected step size of the process must be bounded, as demonstrated by the process $(\randomProcess[\timePoint])_{\timePoint \in \Na}$ with $X_0 = 1$ and, for all $t$, with probability $1/2$, $X_{t+1} = 0$ and otherwise $X_{t+1} = 2X_t - 2\delta$.

%Note that, for computing the drift, it is frequently useful to find a bound conditional on the current state and derive $\E{\randomProcess[\timePoint] - \randomProcess[\timePoint + 1]}[\randomProcess[\timePoint]] \geq \delta$. Then one can apply the law of total expectation to get
%$$
%\E{\randomProcess[\timePoint] - \randomProcess[\timePoint + 1]} = \E{\E{\randomProcess[\timePoint] - \randomProcess[\timePoint + 1]}[\randomProcess[\timePoint]]}[][\big] \geq \E{\delta} = \delta\ .
%$$
The following drift theorems are both motivated by the observation that various processes exhibit a strength of the drift depending on the current value of the process.
The multiplicative drift theorem covers the case where the drift is proportional to the current value of the process.

\begin{theorem}[\textrm{Multiplicative drift~\cite{DBLP:conf/gecco/DoerrJW10} with tail bounds~\cite{DBLP:conf/ppsn/DoerrG10a}}]
    \label{thm:multiDrift}
    Let $(\randomProcess[\timePoint])_{\timePoint \in \Na}$ be an integrable Markov chain over $\{0, 1\} \cup S$, where $S \subset \Re_{> 1}$, and let $\firstHittingTime = \inf\{\timePoint \in \Na \mid \randomProcess[\timePoint] = 0\}$.

    Assume that there is a $\delta \in \Re_{> 0}$ such that, for all $s \in S \cup \{1\}$ and all $\timePoint < \firstHittingTime$, it holds that
  $\E{\randomProcess[\timePoint] - \randomProcess[\timePoint + 1]}[\randomProcess[\timePoint] = s] \geq \delta s\ .$
    Then
    \[
        \E{\firstHittingTime} \leq \frac{1 + \ln \E{\randomProcess[0]}}{\delta}\ .
    \]
    Further, for all $k > 0$ and $s \in S \cup \{1\}$, it holds that
    \[
        \Pr{\firstHittingTime > \frac{k + \ln s}{\delta}}[\randomProcess[0] = s] \leq \mathrm{e}^{-k}\,.
    \]
\end{theorem}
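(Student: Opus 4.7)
The plan is to handle the two claims separately: the tail bound via a stopped supermartingale argument, and the expectation bound via Theorem~\ref{thm:addDrift} applied to a logarithmic transform of the process.

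For the tail bound, I would define the stopped process $\widetilde{X}_t$ by $\widetilde X_t = X_t$ for $t < T$ and $\widetilde X_t = 0$ for $t \geq T$. On $\{t < T\}$ the process lives in $S$, so the multiplicative drift hypothesis yields $\E{\widetilde X_{t+1}}[\widetilde X_t] \leq (1-\delta)\widetilde X_t$; on $\{t \geq T\}$ both sides vanish. Using $1-\delta \leq \eulerE[-\delta]$ and iterating then gives $\E{\widetilde X_t}[X_0 = s] \leq s\,\eulerE[-\delta t]$. Since $\{T > t\} \subseteq \{\widetilde X_t \geq 1\}$, Markov's inequality implies $\Pr{T > t}[X_0 = s] \leq s\,\eulerE[-\delta t]$, and substituting $t = (k + \ln s)/\delta$ produces the claimed tail bound $\eulerE[-k]$.

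For the expectation bound, I would apply Theorem~\ref{thm:addDrift} with the potential $Y_t$ set to $1 + \ln X_t$ whenever $X_t \geq 1$ and to $0$ otherwise. This choice keeps $Y_t \geq 0$ throughout and makes $Y_t = 0$ exactly on the target region, so the $Y$-hitting time at $0$ agrees with $T$. The crux is showing that $Y_t$ inherits additive drift of at least $\delta$. In the case $X_{t+1} \geq 1$, the elementary inequality $-\ln z \geq 1-z$ applied to $z = X_{t+1}/s$ yields $Y_t - Y_{t+1} \geq 1 - X_{t+1}/s$; in the case $X_{t+1} = 0$, one has $Y_t - Y_{t+1} = 1 + \ln s \geq 1 = 1 - X_{t+1}/s$. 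Taking conditional expectation given $X_t = s$ and invoking the multiplicative drift hypothesis then gives $\E{Y_t - Y_{t+1}}[X_t = s] \geq 1 - (1-\delta) = \delta$. Theorem~\ref{thm:addDrift} now yields $\E{T} \leq \E{Y_0}/\delta$, and Jensen's inequality (concavity of $\ln$) closes the proof via $\E{Y_0} \leq 1 + \ln \E{X_0}$.

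The hardest part will be identifying the right potential $Y_t$ and verifying the uniform pointwise bound $Y_t - Y_{t+1} \geq 1 - X_{t+1}/s$: the $+1$ in the definition of $Y_t$ is crucial to handle the case $X_{t+1} = 0$, and the inequality $-\ln z \geq 1-z$ is precisely the identity that converts the expected-ratio bound into an expected-log bound. A minor secondary care-point is the stopped-process construction for the tail bound, which is needed because the drift hypothesis only constrains states in~$S$.
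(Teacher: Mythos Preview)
The paper does not actually prove Theorem~\ref{thm:multiDrift}; it is stated with attribution to \cite{DBLP:conf/gecco/DoerrJW10} and \cite{DBLP:conf/ppsn/DoerrG10a}, and the surrounding text only remarks that the multiplicative drift theorem ``was originally derived from the additive'' and that Doerr and Goldberg later gave a direct proof. So there is no in-paper proof to compare against.

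That said, your proposal is correct and recapitulates precisely the two cited approaches. Your expectation argument is the reduction to Theorem~\ref{thm:addDrift} via the potential $Y_t = 1 + \ln X_t$ (this is the \cite{DBLP:conf/gecco/DoerrJW10} route the paper alludes to), and the pointwise inequality $Y_t - Y_{t+1} \geq 1 - X_{t+1}/s$ together with the case split at $X_{t+1}=0$ is exactly the standard verification; the ``$+1$'' is indeed what makes the absorbing case go through. Your tail argument---iterate the contraction $\E{\widetilde X_{t+1}\mid \widetilde X_t}\le(1-\delta)\widetilde X_t$ on the stopped process and apply Markov---is the Doerr--Goldberg argument from \cite{DBLP:conf/ppsn/DoerrG10a}. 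Two minor remarks: (i) when you substitute $t=(k+\ln s)/\delta$ you are implicitly treating $t$ as real while $T$ is integer-valued, which is the usual harmless sloppiness in this literature; (ii) the Jensen step $\E{Y_0}\le 1+\ln\E{X_0}$ is immediate when $X_0\ge 1$ almost surely, and needs a short extra line otherwise. Neither affects the substance of the argument.
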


A more general version of both theorems above is the variable drift theorem, which allows for any \emph{monotone} dependency of the drift on the current state (meaning that a larger distance to the target has to imply a larger drift).
\begin{theorem}[\textrm{Variable drift~\cite{DBLP:journals/ijicc/MitavskiyRC09, Johannsen10}}]
    \label{thm:variDrift}
    Let $(\randomProcess[\timePoint])_{\timePoint \in \Na}$ be an integrable Markov chain over $\{0, 1\} \cup S$, where $S \subset \Re_{> 1}$, and let $\firstHittingTime = \inf\{\timePoint \in \Na \mid \randomProcess[\timePoint] = 0\}$.

    % The variable drift function
    \DeclareDocumentCommand{\variableDriftFunction}{o}
    {%
        \mathOrText{h\IfNoValueTF{#1}{}{\left({#1}\right)}}%
    }
    If there exists a monotonically increasing function $\variableDriftFunction\colon \Re^+ \to \Re_{\geq 0}$ such that, for all $s \in S \cup \{1\}$ and all $\timePoint < \firstHittingTime$, it holds that
    $\E{\randomProcess[\timePoint] - \randomProcess[\timePoint + 1]}[\randomProcess[\timePoint] = s] \geq \variableDriftFunction[s]\ ,$
    then
    \[
        \E{\firstHittingTime} \leq \frac{1}{\variableDriftFunction[1]} + \int_{1}^{\E{\randomProcess[0]}}\frac{1}{\variableDriftFunction[x]}\,\mathrm{d}x\ .
    \]
\end{theorem}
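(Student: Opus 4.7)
My plan is to reduce the variable drift setting to additive drift (Theorem~\ref{thm:addDrift}) by means of a carefully chosen potential function. Concretely, I would define $g \colon \Re_{\geq 0} \to \Re_{\geq 0}$ piecewise by $g(x) = x / \variableDriftFunction[1]$ for $x \in [0, 1)$ and $g(x) = 1/\variableDriftFunction[1] + \int_1^x 1/\variableDriftFunction[y]\,\mathrm{d}y$ for $x \geq 1$, and then consider the transformed process $\transformedProcess[\timePoint] := g(\randomProcess[\timePoint])$. By construction $\transformedProcess[\timePoint] \geq 0$ and, since $\randomProcess[\timePoint]$ never takes values in $(0,1)$, we have $\transformedProcess[\timePoint] = 0$ exactly when $\randomProcess[\timePoint] = 0$, so the first-hitting time of $\transformedProcess$ at level $0$ coincides with $\firstHittingTime$. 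The choice $g' = 1/\variableDriftFunction$ is motivated by the heuristic $\E{\transformedProcess[\timePoint] - \transformedProcess[\timePoint + 1]} \approx g'(\randomProcess[\timePoint]) \cdot \E{\randomProcess[\timePoint] - \randomProcess[\timePoint + 1]} \geq g'(\randomProcess[\timePoint]) \cdot \variableDriftFunction[\randomProcess[\timePoint]] = 1$, converting a variable drift into a constant one.

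The central technical step is a tangent-style inequality
\[
    g(s) - g(y) \geq \frac{s - y}{\variableDriftFunction[s]}
\]
valid for every $s \in S$ and every $y \geq 0$. I would prove it by splitting into the cases $y \geq s$, $1 \leq y < s$, and $y < 1$; in each case the estimate reduces to replacing $1/\variableDriftFunction[\cdot]$ inside the relevant integral by $1/\variableDriftFunction[s]$, which is justified by monotonicity of $\variableDriftFunction$. Substituting $y = \randomProcess[\timePoint + 1]$ and taking conditional expectation on $\randomProcess[\timePoint] = s$ yields
\[
    \E{\transformedProcess[\timePoint] - \transformedProcess[\timePoint + 1]}[\randomProcess[\timePoint] = s] \geq \frac{\E{\randomProcess[\timePoint] - \randomProcess[\timePoint + 1]}[\randomProcess[\timePoint] = s]}{\variableDriftFunction[s]} \geq 1,
\]
so Theorem~\ref{thm:addDrift}\ref{item:addDriftUpperBound} applies to $\transformedProcess$ with $\additiveDriftConstant = 1$ and delivers $\E{\firstHittingTime} \leq \E{\transformedProcess[0]} = \E{g(\randomProcess[0])}$.

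To close the gap to the stated bound, I would observe that $g'(x) = 1/\variableDriftFunction[\max(x,1)]$ is non-increasing by monotonicity of $\variableDriftFunction$, so $g$ is concave on $\Re_{\geq 0}$. Jensen's inequality then gives $\E{g(\randomProcess[0])} \leq g(\E{\randomProcess[0]})$, and whenever $\E{\randomProcess[0]} \geq 1$ the right-hand side is exactly $1/\variableDriftFunction[1] + \int_1^{\E{\randomProcess[0]}} 1/\variableDriftFunction[x]\,\mathrm{d}x$, matching the claim.

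The principal obstacle is conceptual rather than computational: spotting that the right potential is the antiderivative of $1/\variableDriftFunction$. Once that choice is fixed, the remainder is a short exercise in monotonicity plus Jensen's inequality. Some care is also needed at the boundary $x = 1$ to keep $g$ concave across the joint between the linear and the integral pieces, and to ensure that the tangent inequality holds uniformly in $y$ including the subthreshold range $y < 1$.
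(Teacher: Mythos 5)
The paper itself gives no proof of this theorem~--- it is quoted from Mitavskiy et al.\ and Johannsen, and the authors explicitly state it only as an illustration~--- so there is no internal argument to compare against. Your proof is the standard one from those sources and is correct: the antiderivative potential $g$ with $g'=1/h$, the tangent-style inequality $g(s)-g(y)\ge (s-y)/h(s)$ obtained from monotonicity of $h$ (all three cases check out, as do continuity and concavity of $g$ across the joint at $x=1$), the reduction to additive drift with constant $1$, and the final application of Jensen's inequality. The only loose end is the degenerate case $\mathrm{E}[X_0]<1$ (which requires $X_0=0$ with positive probability), where the stated bound itself is ill-formed; restricting to $\mathrm{E}[X_0]\ge 1$ as you do is the intended reading.
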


These three drift theorems already cover a lot of applications.
In fact, in this article, we only need the additive- and the multiplicative drift theorem; we stated the variable drift theorem solely as an illustration.
Further drift theorem variants exist, for example, for lower bounds on multiplicative drift~\cite{Witt13MultiLowerBound} or on variable drift~\cite{Lehre2014}.
Moreover, there are concentration bounds for hitting times under additive drift~\cite{DBLP:journals/algorithmica/Kotzing16} and occupation probabilities under additive drift~\cite{KotzingLW15}.
When the drift goes away from the target, we speak of \emph{negative drift}.
The negative drift theorem \cite{OlivetoW11,OlivetoW12} gives an exponential lower bound in this setting.

We note that, in our applications of the drift theorems in the following, we do not show that the random process under consideration is an integrable Markov chain, since this is easily seen from the context that they are defined in.

\section{Coupon Collector, Winning Streaks and Vertex Cover}
\label{sec:simpleDrift}

We start this section with the classic coupon collector process, given in the introduction.

\begin{theorem}[Coupon Collector]
Suppose we want to collect at least one of each kind of $n \in \Na_{\geq 1}$ coupons. Each round, we are given one coupon chosen uniformly at random from the~$n$ kinds. Then, in expectation, we have to collect for at most $n(1+ \ln n)$ iterations. Furthermore, for all $k \in \Re_{> 0}$, overshooting this time by $kn$ has a probability of at most $\mathrm{e}^{-(k+1)}$\!.
\end{theorem}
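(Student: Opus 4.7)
The plan is to apply the multiplicative drift theorem (Theorem~\ref{thm:multiDrift}) with the natural potential: let $\randomProcess[\timePoint]$ denote the number of coupon kinds still missing after $\timePoint$ rounds, so that $\randomProcess[0] = n$ and the collection completes exactly at the first time $\firstHittingTime$ with $\randomProcess[\timePoint] = 0$. The process takes values in $\{0, 1, \ldots, n\}$, which fits the required state space $\{0, 1\} \cup S$ with $S = \{2, \ldots, n\} \subset \Re_{>1}$, and the theorem's hitting time $\inf\{\timePoint : \randomProcess[\timePoint] < 1\}$ coincides with $\firstHittingTime$ because $\randomProcess$ is integer-valued.

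Next I would verify the multiplicative drift. Conditioned on $\randomProcess[\timePoint] = s \geq 1$, the round's coupon is one of the $s$ still-missing kinds with probability $s/n$ (in which case $\randomProcess$ decreases by $1$) and redundant otherwise, giving $\E{\randomProcess[\timePoint] - \randomProcess[\timePoint + 1]}[\randomProcess[\timePoint] = s] = s/n$. This is exactly the multiplicative drift condition with $\delta = 1/n$. Plugging $\delta$ and $\E{\randomProcess[0]} = n$ into the expectation bound of Theorem~\ref{thm:multiDrift} yields $\E{\firstHittingTime} \leq (1 + \ln n)/(1/n) = n(1 + \ln n)$ directly.

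For the overshoot bound, I would invoke the tail statement of Theorem~\ref{thm:multiDrift} with $s = n$ and reindex the theorem's free parameter by $k + 1$ in place of $k$. The threshold then becomes $((k + 1) + \ln n)/\delta = n(1 + \ln n) + kn$, and the corresponding probability bound is $\eulerE[-(k+1)]$, which is exactly the claimed overshoot probability.

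The only substantive step is the choice of potential; after that, both claims reduce to plugging constants into Theorem~\ref{thm:multiDrift}, so there is no real obstacle. The sole bookkeeping subtlety is the algebraic reindexing $k \mapsto k+1$ that aligns the theorem's generic tail formulation with the overshoot-above-$n(1+\ln n)$ formulation in the statement. This smoothness is precisely the point of the section: once the right potential is visible, drift theory turns a classical first-hitting-time argument into a one-line computation.
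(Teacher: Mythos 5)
Your proposal is correct and matches the paper's proof: the same potential (number of missing coupons), the same drift computation $\E{\randomProcess[\timePoint] - \randomProcess[\timePoint+1]}[\randomProcess[\timePoint] = s] = s/n$, and a direct application of Theorem~\ref{thm:multiDrift}. You are in fact slightly more explicit than the paper, which leaves the plugging-in of $\delta = 1/n$, $\E{\randomProcess[0]} = n$, and the $k \mapsto k+1$ reindexing for the tail bound implicit under ``gives the desired result.''
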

\begin{proof}
Let $X_t$ be the number of coupons missing after $t$ iterations and $s$ an outcome of \randomProcess[\timePoint] before we collected all coupons. The probability of making progress (of $1$) with coupon $t+1$ is $X_t/n$. Thus, $\E{X_t - X_{t+1}}[X_t = s] = s/n$. An application of the multiplicative drift theorem (\Cref{thm:multiDrift}) gives the desired result.
\end{proof}

One can easily derive a lower bound on the expected first-hitting time with the same asymptotic growth with a lower-bounding multiplicative drift theorem~\cite{Witt13MultiLowerBound}. Using the analogous proof, one can directly analyze a \emph{generalized version} of the coupon collector process as follows.

\begin{theorem}[Generalized Coupon Collector]
Suppose we want to collect at least one of each kind of $n \in \Na_{\geq 1}$ coupons. For each kind of coupon and each round, we get this kind of coupon with probability at least $p \in (0, 1]$. Then, in expectation, we have to collect for at most $(1+ \ln n)/p$ iterations. Furthermore, for all $k \in \Re_{> 0}$, overshooting this time by $k/p$ has a probability of at most $\mathrm{e}^{-(k+1)}$.
\end{theorem}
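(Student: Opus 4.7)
The plan is to mimic the previous proof almost verbatim, using the multiplicative drift theorem (Thm.~\ref{thm:multiDrift}) on the potential $\randomProcess[\timePoint]$ defined as the number of kinds of coupons still missing after $\timePoint$ rounds. Then $\randomProcess[0] = n$ and the stopping time $\firstHittingTime = \inf\{\timePoint \in \Na \mid \randomProcess[\timePoint] < 1\}$ coincides with the time at which all kinds have been collected.

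The key step is establishing the multiplicative drift. For any outcome $s$ of $\randomProcess[\timePoint]$ before termination, each of the $s$ missing kinds is obtained in round $\timePoint + 1$ with probability at least $p$ by assumption. By linearity of expectation, the expected number of newly collected kinds in that round is at least $p s$, so
\[
    \E{\randomProcess[\timePoint] - \randomProcess[\timePoint + 1]}[\randomProcess[\timePoint] = s] \geq p s\ .
\]
Note that this step relies only on linearity of expectation and not on independence of the events ``kind $i$ is received in round $\timePoint + 1$''; this is the only place where one has to be a little careful, and it is the reason the generalization goes through so cleanly.

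From here, Thm.~\ref{thm:multiDrift} applied with $\delta = p$ directly yields the bound $\E{\firstHittingTime} \leq (1 + \ln n)/p$ on the expected number of rounds. For the tail bound, using the concentration statement of the same theorem with $s = n$ gives, for every $k' > 0$,
\[
    \Pr{\firstHittingTime > \frac{k' + \ln n}{p}}[\randomProcess[0] = n] \leq \eulerE[-k']\ .
\]
Setting $k' = k + 1$ rewrites the left-hand side as $\Pr{\firstHittingTime > (1+\ln n)/p + k/p}$, which is precisely the overshooting event, and yields the desired probability $\eulerE[-(k+1)]$. There is no real obstacle beyond the drift computation; everything else is plugging into Thm.~\ref{thm:multiDrift} exactly as in the classical coupon-collector proof.
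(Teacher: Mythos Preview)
Your proposal is correct and follows essentially the same approach as the paper: define $X_t$ as the number of missing coupon kinds, observe the multiplicative drift $\E{X_t - X_{t+1}}[X_t = s] \geq ps$ via linearity of expectation, and apply Theorem~\ref{thm:multiDrift}. You are somewhat more explicit than the paper in spelling out the tail-bound substitution $k' = k+1$ and in emphasizing that only linearity (not independence) is needed, but the argument is the same.
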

\begin{proof}
Let $X_t$ be the number of coupons missing after $t$ iterations and $s$ an outcome of \randomProcess[\timePoint] before we collected all coupons. The expected progress is $\E{X_t - X_{t+1}}[X_t = s] \geq p s$, since the expected number of missing coupons that we get in the next iteration is $p s$. An application of the multiplicative drift theorem (\Cref{thm:multiDrift}) gives the desired result.
\end{proof}

Note that this generalized version does not make any assumptions on how many coupons we get per iteration, or whether these indicator random variables are in any way correlated.

We now turn to the well-known geometric distribution. The typical computation for its expectation involves modifying infinite sums. Using drift, the computation is rather simple.

\begin{theorem}[Geometric Distribution]
Suppose there is an iterative process where, in each iteration, a \emph{success} event happens with some probability $p \in (0, 1]$. Then the expected time until the first success event is~$1/p$.
\end{theorem}
\begin{proof}
For all $t \in \Na$, let $X_t$ be~$0$ if a success event has happened within the first~$t$ iterations, and $1$ otherwise. Then $\E{X_t - X_{t + 1}}[X_t = 1] = p$. Thus, the additive drift theorem (\Cref{thm:addDrift}) gives us an expected number of $1/p$ iterations until the first success event.
\end{proof}
Note that this proof trivially extends to having only a lower or upper bound on the probability of achieving success in any iteration (resulting in upper or lower bounds on the expected time until first success, respectively). Furthermore, in this case, the chance of success in a given iteration may depend on successes in preceding iterations, just as long as it is at least (respectively at most) $p$ regardless of the outcomes of previous iterations.

We now considers sequences of random bits and run lengths therein. In the proof we apply the additive drift theorem not going down towards $0$, but going up to a value of $k$. Since the additive drift is symmetrical, we use it in either direction equally.

\begin{theorem}[Winning Streaks]
Let $k \in \Na$ be given. Consider flipping a fair coin indefinitely. Then the expected number of iterations until the first time that \emph{heads} comes up $k$ times in a row is (exactly) $f(k) = 2^{k+1} - 2$.
\end{theorem}
\begin{proof}
	For all $t \in \Na$, let $R_t$ be the length of the current streak of heads after $t$ iterations ($R_t = 0$ if in iteration $t$ we got tails, as well as before any coin flip).
	Let $X_t = f(R_t)$ be our process for which we aim to show drift. Let $i \in \Na$ be given. If our current streak of heads is~$i$, then in the next iteration one of two things happens: either we lose all progress, falling to a potential of $f(0) = 0$, or we gain $f(i+1) - f(i)$. Each happens with probability $1/2$, so we have
	\begin{align*}
    	\E{X_{t+1} - X_t \mid X_t = f(i)}
    	 = \frac{1}{2}f(i+1)  - f(i)
    	 = 2^{i+2}/2 - 2/2 - (2^{i+1} - 2)
    	 = 1.
	\end{align*}
	Thus, using the additive drift theorem (\Cref{thm:addDrift}), going up instead of down, we get an expected number of iterations of $f(k) = 2^{k+1}-2$ to reach a streak of $k$ heads.
\end{proof}

	Note that the potential function in the last proof, as in many places where potential functions are used, is not intuitive, so let us discuss where this potential function comes from. We decide we want to set up for additive drift, since the additive drift theorem gives both lower and upper bounds. Since any potential function that gives an additive drift can be normalized to give an additive drift of $1$, we search for a potential function that gives a drift of exactly $1$. From the two possible outcomes of the coin flipping process in each iteration, we now get the condition of $f(i+1)/2 - f(i) = 1$ for the potential $f$. In this case, this is a straightforward and easy to solve recurrence relation, so that with the (arbitrary) setting of $f(0) = 0$ we get the desired formula for $f$.

\subparagraph{More Complex Problems}
The following two examples consider processes that are not easily fully described by a single number.
In order to nonetheless apply drift theory, we use slightly more advanced versions of the drift theorems.
In these settings, the random process $(X_t)_{t \in \Na}$ and the states~$s$ are defined over \emph{some} space~$\mathcal{S}$.
In addition, we require a function $f\colon \mathcal{S} \to \Re$.
When exchanging all occurrences of~$X_t$ in a drift theorem that are \emph{not} inside a condition by $f(X_t)$, the statement remains true.
This allows us to condition on more complex processes.

Our next example is a randomized algorithm for finding, in expectation, a $2$-approximation of the classical vertex cover problem. For an undirected graph $(V, E)$, a subset $C \subseteq V$ such that, for all $\{u, v\} \in E$, $u$ or $v$ is in~$C$ is called a \emph{vertex cover}. By the additive drift theorem (\Cref{thm:addDrift}), we easily bound the expected size of the vertex cover that the algorithm constructs.

\begin{theorem}[Vertex Cover Approximation]
Given an undirected graph, iteratively choose an uncovered edge and add uniformly at random an endpoint to the cover. Then, in expectation, the resulting cover is a $2$-approximation of an optimal vertex cover of the given graph.
\end{theorem}
\begin{proof}
Let a graph $G$ be given. Furthermore, fix a minimum vertex cover $C$. For all $t$, let $D_t$ be the set of vertices chosen by the algorithm after $t$ iterations. Let $X_t$ be $0$ if $D_t$ is a vertex cover, and otherwise let $X_t$ be the number of vertices of $C$ that are not in $D_t$. Clearly, the algorithm terminates exactly when $\randomProcess[\timePoint] = 0$. Furthermore, in each step, the algorithm selects a vertex from $C$ with probability at least $1/2$, since, for every edge of $G$, at least one of the endpoints is in $C$. Let~$s$ denote a set of vertices. We get $\E{\randomProcess[\timePoint] - \randomProcess[\timePoint + 1]}[D_{\timePoint} = s] \geq \frac{1}{2}$. Hence, using the additive drift theorem (\Cref{thm:addDrift}), we get that the algorithm terminates in expectation after choosing $2|C|$ vertices.
\end{proof}

Our last example is a simple randomized sorting algorithm. This and similar sorting algorithms were considered by Scharnow, Tinnefeld, and Wegener~\cite{STW05}. The analysis via the multiplicative drift theorem is short, easy, and intuitive.

\newcommand*{\arrayName}{\mathOrText{A}}
\begin{theorem}[Random Sorting]
Consider the sorting algorithm which, given an input array \arrayName of length $n \in \Na_{\geq 1}$, iteratively chooses two different
positions  of the array uniformly at random and swaps them if and only if they are out order. Then the algorithm obtains a sorted array after
$\Theta(n^2 \log n)$ iterations in expectation.
\end{theorem}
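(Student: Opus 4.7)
The plan is to prove matching upper and lower bounds of $\bigO{n^2 \log n}$ and $\bigOmega{n^2 \log n}$ on the expected run time. For simplicity I assume that the entries of $\arrayName$ are distinct.

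For the upper bound, I take as potential the number of inversions after $\timePoint$ iterations: let $\randomProcess[\timePoint]$ be the cardinality of $\{(i,j) : i < j \text{ and } \arrayName[i] > \arrayName[j]\}$, so that $\randomProcess[0] \leq \binom{n}{2}$ and the algorithm terminates exactly when $\randomProcess[\timePoint] = 0$. The key step is to show that, conditional on $\randomProcess[\timePoint] = s > 0$, one has $\E{\randomProcess[\timePoint] - \randomProcess[\timePoint+1]}[\randomProcess[\timePoint] = s] \geq s/\binom{n}{2}$. I would argue that swapping any inverted pair $(i,j)$ with $i<j$ decreases the total inversion count by at least~$1$: the pair $(i,j)$ itself contributes $-1$; for every index $a<i$ (respectively $b>j$) the two pairs $(a,i),(a,j)$ (respectively $(i,b),(j,b)$) merely swap their inversion statuses and the total contribution is $0$; for every $k$ with $i<k<j$, a quick case distinction on whether $\arrayName[k]$ lies below $\arrayName[j]$, strictly between $\arrayName[j]$ and $\arrayName[i]$, or above $\arrayName[i]$ shows that $(i,k)$ and $(k,j)$ together contribute $0$ except in the middle case, where they contribute $-2$. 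Since exactly $s$ of the $\binom{n}{2}$ uniformly chosen position-pairs are inverted, the desired bound follows, and Theorem~\ref{thm:multiDrift} applied with $\delta = 2/(n(n-1))$ and $\E{\randomProcess[0]} \leq n^2/2$ yields $\E{\firstHittingTime} \leq \bigO{n^2 \log n}$.

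For the lower bound, I exhibit a worst-case input that forces a coupon-collector dynamic. Take $n$ even and let $\arrayName = (2,1,4,3,\ldots,n,n-1)$. The inverted position-pairs are exactly the $n/2$ pairs $(2i-1,2i)$, and for any other pair $(a,b)$ with $a<b$ one checks that $\arrayName[a] < \arrayName[b]$, because values in the $i$-th block $\{2i-1,2i\}$ are strictly smaller than those in later blocks. This property is preserved by the algorithm, since swapping a pair $(2i-1,2i)$ only permutes the two values inside that block. Hence each iteration either picks one of the $n/2$ special pairs (each with probability $1/\binom{n}{2}$) and fixes that inversion, or leaves $\arrayName$ unchanged. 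The expected time to collect all $n/2$ special pairs is $\binom{n}{2} \cdot \sum_{i=1}^{n/2} 1/i = \bigOmega{n^2 \log n}$ by the classical coupon-collector bound.

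The main technical hurdle is the one-step analysis used in the upper bound: swapping two non-adjacent values can simultaneously flip the inversion status of up to $2(n-2)$ other pairs, so checking that the net effect is always at most $-1$ requires the case distinction above. Once that is in place, both halves of the claim follow directly from Theorem~\ref{thm:multiDrift} and the standard coupon-collector lower bound, respectively.
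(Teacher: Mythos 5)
Your proposal is correct and follows essentially the same route as the paper: the inversion-count potential with the same case analysis showing each chosen inversion decreases the count by at least one, the multiplicative drift theorem for the upper bound, and the same pairwise-swapped array reducing the lower bound to a coupon collector on $n/2$ pairs. Your treatment of the indices $a<i$ and $b>j$ is slightly more explicit than the paper's, but the argument is the same.
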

\begin{proof}
For all $i, j \in [1, n] \cap \Na$ with $i < j$, an ordered pair $(i,j)$ is called an \emph{inversion}
if and only if $\arrayName[i] > \arrayName[j]$. Note that the maximum number of inversions is $\binom{n}{2}$. Let $X_t$ be the number of inversions after $t \in \Na$ iterations, and let $A_t$ denote the array during that iteration. If the algorithm chooses a pair which is not an inversion, nothing changes. If the algorithm chooses an inversion $(i,j)$, then this inversion is removed; for any other inversion, only indices $k \in [1, n] \cap \Na$ with $i < k < j$ are relevant.
If $\arrayName_{\timePoint}[k] < \arrayName_{\timePoint}[j]$ ($< \arrayName_{\timePoint}[i]$), then $(i,k)$ is an inversion before and after the swap, while $(k,j)$ is neither an
inversion before nor after the swap; similarly for $\arrayName_{\timePoint}[k] > \arrayName_{\timePoint}[i]$ ($> \arrayName_{\timePoint}[j]$). Finally, if $\arrayName_{\timePoint}[j] < \arrayName_{\timePoint}[k] < \arrayName_{\timePoint}[i]$, then
$(i,k)$ and $(k,j)$ are inversions before the swap but are not afterwards. Overall, this shows that the number of
inversions goes down by at least $1$ whenever the algorithm chooses an inversion for swapping.
Let $\timePoint$ be such that $\arrayName_{\timePoint}$ is not sorted and let $a$ denote an outcome of $\arrayName_{\timePoint}$. Let $s$ be the number of inversions of $\arrayName_{\timePoint}$ (which means $X_t = s$).
%Let $s$ denote an outcome of \randomProcess[\timePoint] before~$\arrayName_{\timePoint}$ is sorted, and let~$a$ denote an outcome of~$\arrayName_{\timePoint}$.
Since the probability of the algorithm choosing an inversion is $s/\binom{n}{2}$, we get $\E{X_t - X_{t+1}}[\arrayName_{\timePoint} = a] \geq s/\binom{n}{2}$. An application of the multiplicative drift theorem (\Cref{thm:multiDrift}) gives the desired upper bound.

Regarding the lower bound, consider the array $\arrayName$ which is almost sorted but the first and second element are swapped,
the third and fourth, and so on. Then the algorithm effectively performs a coupon collector process on $n/2$ coupons,
where each has a probability of $1/\binom{n}{2}$ to be collected. This takes an expected time of $\Omega(n^2 \log n)$.
\end{proof}

\section{Genereal Drift Corollaries}
\label{sec:generalCorollaries}

In this section, we state three corollaries to the drift theorems in specific application domains. The first domain is \emph{drift without drift}, using drift theorems to unbiased random walks; \Cref{thm:variance-two-barrier-hitting-time,thm:variance-one-barrier-hitting-time} give the situation for random walks with two and one barriers, respectively. Then, in \Cref{thm:generalFiniteStateSpace}, we show an application to finite state spaces. All these theorems can be applied generally, which we exemplify in \Cref{sec:corollaryApplications}.
For these theorems, we note that our proofs use the same advanced drift theorems that we mention in \Cref{sec:simpleDrift} when discussing more complex processes, that is, we condition on a process that is different than the one whose drift we analyze.

\begin{theorem}[Unbiased Random Walk on the Line]\label{thm:variance-two-barrier-hitting-time}
    Let $n \in \Na$, let $(X_t)_{t \in \Na}$ be an integrable Markov chain over $[0, n] \cap \Na$, and let $T = \inf\{t \in \Na \mid X_t \in \{0, n\}\}$.
    Suppose that there is a $\delta \in \Re_{> 0}$ such that, for all $s \in S \setminus \{0, n\}$ and all $t < T$, it holds that
    \[
        \E{X_{t+1} - X_t}[X_t = s] = 0 \text{ and } \Var{X_{t+1} - X_t}[X_t = s] = \delta\ .
    \]
    Then $\E{T} = \frac{\E{\martingale[0](n - \martingale[0])}}{\delta}$.
\end{theorem}
\begin{proof}
    We consider the process $Y_t = X_t(n-X_t)$.
    Note that~$T$ is the first time $t \in \Na$ such that $Y_t = 0$. Further note that, for all $s \in S \setminus \{0, n\}$ and all $t < T$, it holds that $\E{X_{t + 1}}[X_t = s] = s$. For all $s \in S \setminus \{0, n\}$, we have
    \begin{align*}
        \E{Y_t - Y_{t+1} \mid X_t = s} &= \E{X_{t+1}^2-X_{t}^2 \mid X_t = s} - n\E{X_{t+1}-X_{t} \mid X_t = s}\\
        &= \E{X_{t+1}^2}[X_t = s] - s^2 = \Var{X_{t+1}}[X_t = s]\\
        &= \Var{X_{t+1} - s}[X_t = s] = \Var{X_{t+1} - X_t}[X_t = s] = \delta\ .
    \end{align*}
    Thus, we have a drift of $\delta$ towards $0$. Since $Y_0 = X_0(n - X_0)$, the theorem follows from an application of the additive drift theorem (\Cref{thm:addDrift}).
\end{proof}
Since the proof is based on the additive drift theorem, a lower bound of $\delta$ on the variance is enough for an upper bound on the expected first-hitting time and vice versa.

\begin{theorem}\label{thm:variance-one-barrier-hitting-time}
    Let $n \in \Na$, let $(X_t)_{t \in \Na}$ be an integrable Markov chain over $[0, n] \cap \Na$, and let $T = \inf\{t \in \Na \mid X_t = n\}$.
    Suppose that there is a $\delta \in \Re_{> 0}$ such that, for all $s \in S \setminus \{n\}$ and all $t < T$, it holds that
    \[
        \E{X_{t+1} - X_t}[X_t = s] \geq 0 \text{ and } \Var{X_{t+1} - X_t}[X_t = s] \geq \delta\ .
    \]
    Then $\E{T} \leq \frac{n^2 - \E{\martingale[0]^2}}{\delta}$.
\end{theorem}
\begin{proof}
    We aim to apply the additive drift theorem (\Cref{thm:addDrift}) to the process $Y_t = n^2 - X_t^2$.
    Note that~$T$ is the first time such that $Y_t = 0$. Further note that, for all $s \in S \setminus \{n\}$ and all $t < T$, it holds that $\E{X_{t + 1}^2}[X_t = s]^2 \geq s^2$. For all $s \in S \setminus \{n\}$, we have
    \begin{align*}
        \E{Y_t - Y_{t+1} \mid X_t = s} &= \E{X_{t+1}^2 - X_{t}^2}[X_t = s] = \E{X_{t+1}^2}[X_t = s] - s^2\\
        &\geq \E{X_{t+1}^2}[X_t = s] - \E{X_{t + 1}}[X_t = s]^2 = \Var{X_{t+1}}[X_t = s]\\
                &= \Var{X_{t+1} - s}[X_t = s] = \Var{X_{t+1} - X_t}[X_t = s] \geq \delta\ .
    \end{align*}
    Thus, we have a drift of at least $\delta$ towards $0$. Since $Y_0 = n^2 - \martingale[0]^2$, the theorem follows.
\end{proof}

Note that in neither of the two preceding theorems is the process allowed to overshoot the target.

We note that \Cref{thm:variance-one-barrier-hitting-time} is tight for the fair random walk $(X_t)_{t \in \Na}$ over $[0, n] \cap \Na$ where the state~$0$ is reflective, that is, for all $t \in \Na$, it holds that $\Pr{X_{t + 1} = 1}[X_t = 0] = 1$, and the state~$n$ is absorbing.
This is seen by transforming~$X$ into the fair random walk $(Y_t)_{t \in \Na}$ over $[0, 2n] \cap \Na$, where the states~$0$ and~$2n$ are both absorbing, such that, for all $t \in \Na$, it holds that $X_t = |Y_t - n|$.
Informally, we mirror~$X$ at~$0$ and then shift it by~$n$.
Whenever this new process is at~$n$, it goes to either~$n - 1$ or~$n + 1$, each with probability~$1/2$, which results exactly in~$Y$.
Note that $T = \inf\{t \in \Na \mid Y_t \in \{0, 2n\}\} = \inf\{t \in \Na \mid X_t = n\}$.
Applying \Cref{thm:variance-two-barrier-hitting-time} to~$Y$ yields $\E{T} = \E{Y_0(2n - Y_0)}$.
Since $X_0 \leq n$, it holds that $Y_0 = n - X_0$.
Substituting this back into the equation for~\E{T} yields $\E{T} = \E{(n - X_0)(n + X_0)} = n^2 - \E{X_0^2}$, which is exactly the bound of \Cref{thm:variance-one-barrier-hitting-time}.

The following theorem by Kötzing and Krejca~\cite[Theorem 3]{KoetzingK18FiniteStateSpaces} allows for a finer analysis of processes over finite state spaces, as the transition probabilities in each state may use different bounds.
It is applicable to processes that may make arbitrarily large jumps toward the target state (not overshooting it) but that do not move away from the target state by more than a single state at a time.
The authors also provide a version that bounds the expected first-hitting time from below.

\begin{theorem}[Finite Space Drift {\cite{KoetzingK18FiniteStateSpaces}}]\label{thm:generalFiniteStateSpace}
    Let $n \in \Na$, let $(X_t)_{t \in \Na}$ be an integrable Markov chain over $[0, n] \cap \Na$, and let $\firstHittingTime = \inf\{t \in \Na \mid X_t = 0\}$.
    Suppose there are two functions $\pplus\colon \{1, \ldots, n\} \to [0, 1]$ and $\pminus\colon \{0, \ldots, n-1\} \to [0, 1]$ such that, for all $s \in \{1, \ldots, n\}$ and all $\timePoint < \firstHittingTime$, it holds that
    \begin{itemize}
   		\item $\pplus(s) > 0$,

   		\item $\Pr{\randomProcess[\timePoint] - \randomProcess[\timePoint + 1] \geq 1}[\randomProcess[\timePoint] = s] \geq \pplus(s)$,

   		\item $\Pr{\randomProcess[\timePoint] - \randomProcess[\timePoint + 1] = -1}[\randomProcess[\timePoint] = s] \leq \pminus(s)$ (for $s \neq n$), and

   		\item $\Pr{\randomProcess[\timePoint] - \randomProcess[\timePoint + 1] < -1}[\randomProcess[\timePoint] = s] = 0$ (for $s \neq n$).
    \end{itemize}
    Then, for all $s_0 \in [0, n] \cap \Na$, it holds that
    \[
   		\E{\firstHittingTime}[\randomProcess[0] = s_0] \leq \sum_{s = 1}^{s_0}\sum_{i = s}^{n}\frac{1}{\pplus(i)}\prod_{j = s}^{i - 1}\frac{\pminus(j)}{\pplus(j)}\ .
    \]
\end{theorem}

We note that if the process changes by at most one and the bounds in the second and third condition are tight, then the bound holds with equality, showing the tightness of the bound for an important subcase.

\section{Application of Corollaries}
\label{sec:corollaryApplications}

In this section we see several domains in which we apply our corollaries. The gambler's ruin in Section~\ref{sec:drunkardsWalk} is the most straightforward application
% of the unbiased random walk on a line (
\Cref{thm:variance-two-barrier-hitting-time}. A more intricate application is given in Section~\ref{sec:recolour}, where it is used to bound the expected run time for an algorithm to find a certain coloring of a graph. Finally, in Section~\ref{sec:moran}, we discuss a process modeling the spread of mutations in populations, where one case can be solved with \Cref{thm:variance-two-barrier-hitting-time}.

Regarding \Cref{thm:variance-one-barrier-hitting-time}, in Section~\ref{sec:twosat} we use it to derive an upper bound on the time for an algorithm to find a satisfying assignment for a 2-SAT formula.

We mentioned Section~\ref{sec:moran} earlier, where an easy case can be solved with the unbiased random walk on a line. The other two cases are solved by applying \Cref{thm:generalFiniteStateSpace}.

\subsection{Gambler's Ruin}\label{sec:drunkardsWalk}

The gambler's ruin is a random walk on a line, starting at $n$, going either one step left of one step right, each with probability $1/2$, modeling winning or losing a fair coin toss to either win or lose a coin. The question of how long it takes to either be broke ($0$ coins left) or double the starting number of coins is the simplest setting of an unbiased random walk. This process also goes by many other names, such as \emph{drunkard's walk}, \emph{random walk on a line}, or \emph{one-dimensional random walk}.

\begin{theorem}\label{thm:drunkardsWalk}
    Suppose we start with $n \in \Na$ coins and, in each iteration, uniformly at random either gain a coin or lose a coin. Then, after an expected number of exactly $n^2$ iterations, we are either broke or have reached a total of $2n$ coins.
\end{theorem}
\begin{proof}
    We apply \Cref{thm:variance-two-barrier-hitting-time} with a variance of exactly $1$.
\end{proof}

\subsection{Graph Coloring}\label{sec:recolour}

McDiarmid~\cite{McD93} studies the following simple randomized algorithm called \textsc{Recolour}, for coloring a given undirected graph~$G$ with two colors such that it contains no monochromatic triangle:
\textsc{Recolour} starts with an arbitrary $2$-coloring of~$G$. At every step, it checks whether
the current coloring has a monochromatic triangle. If so, \textsc{Recolour} changes the color of one of the vertices of
this triangle uniformly at random. Otherwise, the $2$-coloring has no monochromatic triangles and it is the output of
\textsc{Recolour}.

McDiarmid shows that when \textsc{Recolour} is applied to a 3-colorable graph~$G$, it returns a 2-coloring of~$G$ with no monochromatic triangle in expected time $\bigO{n^4}$. His analysis shows that the expected run time of the algorithm is bounded above by the expected hitting time of a random walk on the line with two absorbing states\footnote{Which is exactly the setting of \Cref{thm:variance-two-barrier-hitting-time}.}\!. This analysis in turn relies on previous results on one-dimensional random walks, which usually require lengthy calculations.

We present a simple and self-contained proof of the $\bigO{n^4}$ expected run time of the \textsc{Recolour} algorithm for finding
a 2-coloring with no monochromatic triangles on 3-colorable graphs. Our proof follows the proof of McDiarmid~\cite{McD93} to
reduce the problem to an unbiased random walk on the line and then uses \Cref{thm:variance-two-barrier-hitting-time}. A similar
analysis can be used to derive an upper bound on the run time of \textsc{Recolour} on hypergraph colorings.

\begin{theorem}[McDiarmid '93]
 The expected run time of \textsc{Recolour} on a 3-colorable graph with $n \in \Na^+$ vertices is $\bigO{n^4}$.
\end{theorem}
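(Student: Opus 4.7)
The plan is to follow McDiarmid's reduction of the runtime of RECOLOUR to the two-barrier process of Theorem~\ref{thm:two-barrier-hitting-time}. First, I would fix any proper 3-coloring $V(G) = V_1 \sqcup V_2 \sqcup V_3$ of the input graph~$G$, which exists by hypothesis. Since each~$V_i$ is an independent set in~$G$, every triangle of~$G$ contains exactly one vertex from each class. In particular, any monochromatic triangle selected by RECOLOUR at a non-terminal step has one vertex in each~$V_i$, and because the algorithm then flips a uniformly random vertex of this triangle, the flipped vertex lies in~$V_i$ with probability exactly $1/3$ for each $i \in \{1,2,3\}$---regardless of how the particular triangle was chosen.

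Next, for each~$i$ and each step~$t$, let $a_i(t)$ denote the number of $V_i$-vertices coloured black at time~$t$, and define the potential $X_t := a_1(t) + (|V_2| - a_2(t)) \in \{0, 1, \ldots, |V_1| + |V_2|\}$. Observe that $X_t = 0$ forces~$V_1$ to be entirely white and~$V_2$ to be entirely black, so every triangle (having one vertex in each class) contains both colours and is therefore not monochromatic; an analogous statement holds at $X_t = |V_1| + |V_2|$. Hence RECOLOUR has terminated by the first time~$X_t$ hits $\{0, |V_1| + |V_2|\}$, and it suffices to upper-bound the expected value of this hitting time.

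A short case analysis on the colour~$c$ of the selected monochromatic triangle shows that, whenever $X_t \in (0, |V_1|+|V_2|)$, the increment $X_{t+1} - X_t$ equals $+1$, $-1$, and $0$ each with probability exactly $1/3$: when~$c$ is black, flipping the $V_1$-vertex decreases~$X_t$ while flipping the $V_2$-vertex increases it (and the $V_3$-flip leaves~$X_t$ fixed); when~$c$ is white the two nonzero cases swap signs, yet the three probabilities remain unchanged. Therefore~$X_t$ is exactly the two-barrier process of Definition~\ref{def:two-barrier} with parameter $p = 1/3$ (up to possibly earlier termination of the algorithm), and Theorem~\ref{thm:two-barrier-hitting-time} gives $\E{\firstHittingTime} \leq X_0 \cdot (|V_1|+|V_2| - X_0)/(2/3) = \bigO{n^2}$, which is certainly $\bigO{n^4}$.

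The main obstacle is identifying this particular potential. Natural candidates such as $a_1(t)$ alone or the Hamming distance to an arbitrary reference colouring fail to be unbiased because their direction of motion depends on the colour~$c$ of the adversarially selected monochromatic triangle. The combination $a_1 + (|V_2| - a_2)$ cancels this dependence precisely because switching~$c$ simultaneously reverses the effect of a $V_1$-flip and of a $V_2$-flip, so what remains is the $1/3$-$1/3$-$1/3$ symmetry induced by the uniform choice of vertex inside the triangle, exactly fitting Definition~\ref{def:two-barrier}.
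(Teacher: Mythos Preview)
Your potential is precisely the paper's: with the two colours identified with $1$ and $2$, the paper sets $U=V_1\cup V_2$ and tracks $Y_t=|\{u\in U:\chi_t(u)=\chi(u)\}|$, which equals your $a_1(t)+\bigl(|V_2|-a_2(t)\bigr)$. The reduction to the two-barrier process with $p=1/3$ and the appeal to Theorem~\ref{thm:two-barrier-hitting-time} are identical, so the mathematical core of your argument matches the paper's.

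There is, however, one genuine omission. The theorem bounds the expected \emph{run time}, not the expected number of recolouring steps. You correctly obtain $\E{\firstHittingTime}=\bigO{n^2}$ for the number of iterations, but your closing remark ``which is certainly $\bigO{n^4}$'' suggests you are simply weakening the bound rather than accounting for a real cost. The paper explicitly multiplies by a per-step factor: at each iteration one must search for a monochromatic triangle, which the paper argues takes $\bigO{n^2}$ time in a $3$-colourable graph, yielding $\bigO{n^2}\cdot\bigO{n^2}=\bigO{n^4}$ overall. Without this per-step analysis your proof does not actually establish the stated bound on run time; add that discussion to complete the argument.
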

\begin{proof}
Let $G=(V,E)$ be a 3-colorable graph, and let $\chi\colon V\rightarrow \{1, 2, 3\}$
be a 3-coloring of $G$.
Let $U=\big\{v\in V\mid \chi(v)\in\{1, 2\}\big\}$  be the set of all vertices which are colored with colors $1$ and~$2$. Note that any 2-coloring of~$G$ that agrees with~$\chi$ on the vertices from~$U$ is a 2-coloring of~$G$ with no monochromatic triangles.
Thus, the run time of \textsc{Recolour} is bounded from above by the expected time that \textsc{Recolour} takes to find such a coloring.

Let $\chi_t$ be the 2-coloring found by \textsc{Recolour} at time $t \in \Na$. Let $Y_t$ be the number of vertices~$u\in U$ such
that $\chi_t(u)=\chi(u)$. The algorithm terminates when $Y_t \in \{0,|U|\}$, since agreeing on all vertices of $U$ is a coloring without monochromatic triangles, but disagreeing on all vertices from $U$ is also such a valid coloring, since the use of the colors is symmetric.

Let $s \in [1, |U| - 1] \cap \Na$ denote an outcome of $Y_t$ before the algorithm terminates. We then have that $\Pr{Y_{t+1}=Y_t+1}[Y_t = s]=1/3$, as, for every monochromatic triangle, there is exactly one
vertex in $u \in U$ with $\chi_t(u) \neq \chi(u)$. Similarly, $\Pr{Y_{t+1}=Y_t-1}[Y_t = s]=1/3$. Thus, $Y_t$ is an unbiased random walk on the line with first-hitting time $\firstHittingTime =
\inf\!\big\{\timePoint \in \Na \mid Y_\timePoint\in\{0,|U|\}\big\}$. Applying
\Cref{thm:variance-two-barrier-hitting-time} with variance $2/3$, we get
\[
\E{T}=\frac{3\E{Y_0(|U| - Y_0)}}{2}\leq \frac{3n^2}{8}\ .
\]
At each step, the algorithm requires $\bigO{n^2}$ time to transition to the next coloring, 
% (since there are only $\bigO{n^2}$ triangles to consider, as otherwise the graph would not be 3-colorable), 
which concludes the proof.
\end{proof}

The analysis of the \textsc{Recolour} algorithm for finding 2-colorings with no monochromatic triangles appears as an
exercise in~\cite[Exercise~7.10]{MU05}.

\subsection{Random 2-SAT}\label{sec:twosat}

Papadimitriou~\cite{P91} studies the following simple randomized algorithm that returns a satisfying assignment
of a satisfiable 2-SAT formula~$\phi$ with $n$ variables within $\bigO{n^4}$ time in expectation: the algorithm starts
with a random assignment of the variables of $\phi$. At every step, the algorithm checks whether there is an unsatisfied
clause for this assignment. If so, the algorithm changes the assignment of one of the variables of this assignment
uniformly at random. Otherwise, the assignment is satisfying and it is the output of the algorithm.

The analysis given is similar to the \textsc{Recolour} algorithm, and it also relies on the previous results on one-dimensional random walks. An extensive analysis of this algorithm appears in~\cite[Section~7.1.1]{MU05}. Here, we present a simpler proof that uses \Cref{thm:variance-one-barrier-hitting-time}.
\begin{theorem}
 The randomized 2-SAT algorithm, when run on a satisfiable 2-SAT formula over $n \in \Na^+$ variables, terminates in $\bigO{n^4}$ time in expectation.
\end{theorem}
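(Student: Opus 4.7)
The plan is to reduce the analysis to the one-barrier random walk of Section~\ref{sec:walksOnTheLine} and then invoke Theorem~\ref{thm:one-barrier-hitting-time}. Since $\phi$ is satisfiable, I would fix, once and for all, an arbitrary satisfying assignment $\sigma^*$ for the analysis. For each iteration $t$, let $X_t \in \{0, 1, \ldots, n\}$ denote the number of variables on which the algorithm's current assignment agrees with $\sigma^*$. The algorithm terminates no later than the first time $X_t = n$, since at that moment the current assignment is exactly $\sigma^*$, which is satisfying.

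Next, I would analyze one step. While the algorithm has not terminated, it has picked an unsatisfied clause $C$. Because $\sigma^*$ satisfies $C$ while the current assignment does not, at least one of the two variables in $C$ must currently disagree with $\sigma^*$, and flipping that variable would increase $X_t$ by $1$. Since the algorithm flips a uniformly random variable in $C$, the step increases $X_t$ by $1$ with probability at least $1/2$ and decreases it by $1$ with probability at most $1/2$. Hence $X_t$ has at least as much upward bias as an unbiased random walk, and I would couple it with a one-barrier process $Z_t$ (Definition~\ref{def:one-barrier}) with parameter $p = 1/2$ and starting state $Z_0 = 0$ such that $Z_t \leq X_t$ holds for every $t$; in particular, $X_t$ reaches $n$ no later than $Z_t$ does.

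Applying Theorem~\ref{thm:one-barrier-hitting-time} to $Z_t$ then yields an expected hitting time of $n^2$ for $Z_t$ to reach $n$, bounding the expected number of iterations of the 2-SAT algorithm by $\bigO{n^2}$. Each iteration takes $\bigO{n^2}$ time, since the algorithm must scan the formula for an unsatisfied clause and a 2-SAT formula on $n$ variables contains at most $\bigO{n^2}$ distinct clauses. Multiplying yields the claimed $\bigO{n^4}$ total expected running time.

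The main technical obstacle is the coupling step: $X_t$'s transition probabilities are only bounded by those of the one-barrier process rather than matching them, and $X_0$ is itself random (determined by the algorithm's initial assignment). Both issues are standard; the coupling can be realized by sharing a single uniform random variable between the two processes at each step, which forces $Z_t$ to move up only when $X_t$ does and $X_t$ to move down only when $Z_t$ does, preserving $Z_t \leq X_t$ by induction from the base case $Z_0 = 0 \leq X_0$. Everything else is a direct application of the machinery of Section~\ref{sec:walksOnTheLine}.
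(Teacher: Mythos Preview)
Your proposal is correct and follows essentially the same approach as the paper: fix a satisfying assignment, track the number of agreeing variables, observe the at-least-$1/2$ probability of moving toward agreement, dominate by the one-barrier process with $p=1/2$, apply Theorem~\ref{thm:one-barrier-hitting-time}, and multiply by the $\bigO{n^2}$ per-iteration cost. Your treatment of the domination/coupling step is in fact slightly more explicit than the paper's, which simply asserts that $X_t$ is dominated by the one-barrier process without spelling out the coupling.
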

\begin{proof}
Let $\phi$ be a satisfiable 2-SAT formula and let $a$ be a satisfying assignment. At each time step $t \in \Na^+$, the randomized
2-SAT algorithm finds a (not necessarily satisfying) assignment~$a_t$. Let $X_t$ be the random variable denoting the
number of variables that have the same truth assignment in both $a$ and $a_t$. Let $T$ be the first time the algorithm
reaches a satisfying assignment for $\phi$. Assume that a clause $x\vee y$ is not satisfied by $a_t$.
Since~$a$ is a satisfying assignment, $a$ and $a_t$ differ in the assignment of at least one of the variables in
this clause. Let $s \in [0, n - 1] \cap \Na$ denote an outcome of \randomProcess[\timePoint] before a satisfying assignment is found. Thus, $\Pr{X_{t+1}=X_t+1}[X_t = s]\geq1/2$ and $\Pr{X_{t+1}=X_t-1}[X_t = s]\leq1/2$. When $a_t=a$, the algorithm
terminates.
By \Cref{thm:variance-one-barrier-hitting-time} with variance bound $1$, we have $\E{T} \leq n^2$\!. In order to transition from~$a_t$ to
$a_{t+1}$, the algorithm requires $\bigO{n^2}$ time
(since a 2-SAT formula has $\bigO{n^2}$ distinct clauses), concluding the proof.
\end{proof}

\subsection{The Moran Process}\label{sec:moran}

The Moran process is a stochastic process introduced in biology to model the spread of genetic mutations in populations. In the Moran process, as introduced in~\cite{Mor1958:Moran}, we are given a population of $n$ individuals, each being one of two types: mutants and non-mutants. The process has a parameter $r \in \Re^+$, which is the fitness of mutants. All non-mutants have fitness~$1$. At each time step, one individual~$x$ is chosen for reproduction, with probability equal to its fitness over the total fitness of the population, i.e. the sum of the fitness of all individuals. The chosen individual then replaces another individual~$y$ of the population, chosen uniformly at random, with a new individual of the same type as~$x$. Thus, if~$x$ is a mutant, then it replaces~$y$ with a mutant, and if~$x$ is a non-mutant, it replaces~$y$ with a non-mutant. When running this process indefinitely with $n-1$ non-mutants and a single mutant as a starting state, it will either reach a state where the population consists exclusively of mutants, which we call \emph{fixation}, or reach a state where the population consists exclusively of non-mutants, which we call \emph{extinction}. One of the core purposes of the Moran process is to compute the fixation probability (or, equivalently, the extinction probability) of a population.

Lieberman, Hauert, and Nowak~\cite{LHN2005:EvoDyn} extended the original Moran process by introducing structured populations in the form of directed graphs. Earlier work on the \emph{time of absorption} (that is, either fixation or extinction) of the Moran process is about simple graphs, such as complete graphs, stars, and regular undirected graphs (see~\cite{BHRS10,TIN06}). A drift theorem was used by D{\'\i}az et al.~\cite{DGMRSS14} to show that, on any undirected graph, the expected time of absorption of the Moran process is $\bigO{n^6}$ when $r>1$, $\bigO{n^4}$ when $r=1$, and $\bigO{n^3}$ when $r<1$, and they also show concentration for these bounds. D{\'\i}az et al.~\cite{DGRS16} show that there are families of \emph{directed} graphs that have, in expectation, exponential time of absorption. Finally, Goldberg, Lapinskas and Richerby~\cite{DBLP:journals/corr/abs-1804-02293}, employed a more elaborate drift potential to show a tight upper bound in $\smallO{n^{3+\varepsilon}}$ of the absorption time for all positive $r\neq 1$.

We show how to use drift to obtain an upper bound on the time to absorption of the original Moran process with $n$ individuals. %In the proof, we will see how a clever potential can lead to very good bounds.
Let $Y_t$ be the number of non-mutants after $t \in \Na$ iterations of the process. Thus, $(Y_t)_{\timePoint \in \Na}$ is a random process on $[0, n] \cap \Na$ with starting value $n-1$ and absorbing states $0$ and $n$. For any $k \in [1, n - 1] \cap \Na$ and all $t \in \Na$, we have
\[
    \Pr{Y_t - Y_{t+1} = -1}[Y_t=k] = \frac{k}{k + r(n-k)} \cdot \frac{n-k}{n}\ .
\]
The first term of the product is because we have a chance of $k/\left(k+r(n-k)\right)$ for choosing a non-mutant for reproduction. The
second term is because there are $n-k$ mutants among $n$ individuals that can be replaced from a non-mutant. For all $k \in [1, n - 1] \cap \Na$, we abbreviate 
\[
p(k) = \frac{k}{k + r(n-k)} \cdot \frac{n-k}{n}
\]
as the probability that the number of non-mutants increases when we currently have $k$ non-mutants. Thus, we get
\begin{equation}\label{eq:moran-increase}
    \Pr{Y_t - Y_{t+1} = -1}[Y_t=k] = p(k).
\end{equation}
Using similar reasoning, we get
\begin{equation}\label{eq:moran-decrease}
    \Pr{Y_t - Y_{t+1} =   1}[Y_t=k] = \frac{r(n - k)}{k + r(n-k)} \cdot \frac{k}{n} = r \cdot p(k)\ .
\end{equation}

We now bound the expected absorption time of the process.

\begin{theorem}\label{thm:driftOnMoran}
     The expected time to absorption of the original Moran process on $n \in \Na+$ individuals is, \vspace{-.8cm}
    \[
     \hspace*{1.5 cm}
     \begin{tabular}{ll}
      \bigO{n^2} & \textrm{ when $r = 1$;}\\
      \bigO{\frac{r+1}{r-1}n\log n} & \textrm{ when $r > 1$; and}\\
      \bigO{\frac{1+r}{1 - r}n\log n} & \textrm{ when $r < 1$.}\end{tabular}
    \]
    \end{theorem}
\begin{proof}
    Recall that $Y_t$ is the number of non-mutants in the population after $t \in \Na$ iterations of the process. We first consider the \textbf{case $\boldsymbol{r=1}$}. From \Cref{eq:moran-increase,eq:moran-decrease}, for all $t \in \Na$ and all $k \in [1, n - 1] \cap \Na$, we have that
    \[
        \Pr{Y_{t+1} = Y_t + 1}[Y_t=k]  = \Pr{Y_{t+1} = Y_t - 1}[Y_t=k] = p(k)\ .
    \]
    We first give the following lower bound on the variance of the process while none of the absorbing states are found,
    \[
        \Var{Y_{t+1} - Y_t}[Y_t=k] = 2p(Y_t) \geq 2 \frac{n - 1}{n} \cdot \frac{1}{n}\ .
    \]
    Thus, using the above lower bound for the variance together with a version of \Cref{thm:variance-two-barrier-hitting-time} that works with such a bound (as mentioned after the theorem), we obtain an upper bound on expected first-hitting time of the absorbing states of $n^2/2$.

    We now consider the \textbf{case $\boldsymbol{r>1}$}.
    % From \Cref{eq:moran-increase,eq:moran-decrease} we have, for all $t \in \Na$ and all $k \in [1, n - 1] \cap \Na$, that $\E{Y_{t} - Y_{t+1}}[Y_t=k] = (r - 1) \cdot p(k)$.
    We aim to apply \Cref{thm:generalFiniteStateSpace}, however, we need to consider a process that is~$0$ once fixation or extinction occurred---as $Y_t$ is only~$0$ in the case of fixation. For this reason, we consider the process $(Y'_t)_{t \in \Na}$ which behaves exactly like~$Y_t$ but states $0$ and $n$ are both mapped to~$0$. That is, $(Y'_t)_{t \in \Na}$ operates on the states $[0, n - 1] \cap \Na$. Note that $T = \inf \{t \in \Na \mid Y'_t = 0\}$ denotes the absorption time of the Moran process.

    If $Y_t < n - 1$, then the expected change of~$Y'_t$ is the same as that of~$Y_t$. For $Y'_t = n - 1$, we get $\Pr{Y'_{t} - Y'_{t + 1} \geq 1}[Y'_t = n - 1] = r \cdot p(k) + p(k) \geq r \cdot p(k)$ as well as $\Pr{Y'_{t} - Y'_{t + 1} = -1}[Y'_t = n - 1] = 0 \leq p(k)$. We already argued that $\pplus(k) = r \cdot p(k)$ and $\pminus(k) = p(k)$ are choices that fulfill all requirements of the theorem. Thus, \Cref{thm:generalFiniteStateSpace} gives an upper bound on the time to absorption of
    \begin{align*}
    		\E{\firstHittingTime} & \leq \sum_{s = 1}^{n-1}\sum_{i = s}^{n-1}\frac{1}{r p(i)}\prod_{j = s}^{i - 1}\frac{p(j)}{r p(j)}
%     		 & = \sum_{s = 1}^{n-1}\sum_{i = s}^{n - 1}\frac{n}{r(n-i)}\cdot\frac{i+r(n-i)}{i} \left(\frac{1}{r}\right)^{i-s}\\
    		  = \sum_{i = 1}^{n-1}\sum_{s = 1}^{i}\frac{n}{r(n-i)}\cdot\frac{i+r(n-i)}{i} \left(\frac{1}{r}\right)^{i-s}\\
    		 & = \sum_{i = 1}^{n-1}\frac{n}{r(n-i)}\cdot\frac{i+r(n-i)}{i}\sum_{s = 1}^{i}\left(\frac{1}{r}\right)^{i-s}
    		  \leq \sum_{i = 1}^{n-1}\frac{n}{r(n-i)}\cdot\frac{i+r(n-i)}{i} \frac{r}{r-1}\\
    		 & =\frac{n}{r-1}\left(\sum_{i = 1}^{n-1}\frac{1}{n-i} + \sum_{i = 1}^{n-1}\frac{r}{i} \right) 
    		   \in \bigO{\frac{r+1}{r-1}n\log n} \ .
    \end{align*}

    Last, for the \textbf{case $\boldsymbol{r<1}$}, consider $Z_t = (n - Y_t) \cdot \boldsymbol{1}\{Y_t > 0\}$ and notice that for this process the states~$0$ and~$n$ from $(Y_t)_{t \in \Na}$ are both mapped to~$0$. From \Cref{eq:moran-increase,eq:moran-decrease}, for all $t \in \Na$ and $k \in [1, n - 1] \cap \Na$, we have
        $\Pr{Z_t - Z_{t+1} \geq 1}[Z_t=k] \geq p(k)$, as well as $\Pr{Z_t - Z_{t+1} = -1}[Z_t=k] \leq r \cdot p(k)\ .$
    Applying \Cref{thm:generalFiniteStateSpace} with $\pplus(k) = p(k)$ and $\pminus(k) = r \cdot p(k)$, after calculations similar to the $r>1$ case we have the following bound.
    \begin{align*}
     \E{\firstHittingTime} & \leq \sum_{s = 1}^{n-1}\sum_{i = s}^{n-1}\frac{1}{p(i)}\prod_{j = s}^{i - 1}\frac{r p(j)}{p(j)}
%      &= \sum_{s = 1}^{n-1}\sum_{i = s}^{n-1}\frac{n}{n-i}\cdot\frac{i+r(n-i)}{i} \cdot r^{i-s}\\
%      &= \sum_{i = 1}^{n-1}\sum_{s = 1}^{i}\frac{n}{n-i}\cdot\frac{i+r(n-i)}{i} \cdot r^{i-s}\\
%      &= \sum_{i = 1}^{n-1}\frac{n}{n-i}\cdot\frac{i+r(n-i)}{i}\sum_{s = 1}^{i}r^{i-s}\\
%      &\leq \frac{n}{1-r}\left( \sum_{i = 1}^{n-1}\frac{1}{n-i}+ \sum_{i = 1}^{n-1}\frac{r}{i} \right)\\
     \in \bigO{\frac{1+r}{1 - r}n\log n}\ .\qedhere
    \end{align*}
\end{proof}

\section{Conclusions}
\label{sec:conclusions}

%We presented drift theory: a collection of theorems that infer expected hitting times of randomized processes by only looking at local changes of the process. We stated the most common drift theorems and explained the intuition underlying drift theory, making the theorems easy to understand. We furthered this understanding by giving examples of how to apply them to classical randomized problems. In many of these cases, the application of a drift theorem was straightforward. However, we also considered more complicated examples, where a more elaborate potential function was necessary to get good bounds.

The beauty in working with drift theory is that it is very intuitive: Expected gains translate into expected times to gain a certain amount. Typically, the process at hand does not need to be fitted too much to suit the needs of a drift theorem, since different drift theorems are available for different needs. In the theory of randomized search heuristics, there are several instances where drift theory has been used to significantly simplify previous analyses.

We encourage our readers to apply drift theory themselves the next time they are facing a randomized process and the research question can be formulated as a hitting time. The biggest challenge then will most likely be to come up with potentials that are well-suited for the process of interest. In order to construct highly non-trivial potentials that end up yielding the desired results, a lot of trial and error is necessary~-- as always. We hope that we sparked the reader's interest in drift theory and wish them much success in applying it.

\bibliography{../BeautifulDrift}

\begin{thebibliography}{10}

\bibitem{bertschinger_et_al:LIPIcs:2020:12883}
Daniel Bertschinger, Johannes Lengler, Anders Martinsson, Robert Meier,
  Angelika Steger, Milo{\v{s}} Trujić, and Emo Welzl.
\newblock {An Optimal Decentralized ($\Delta$ + 1)-Coloring Algorithm}.
\newblock In {\em Proc.\ of ESA'20}, pages 17:1--17:12. Schloss
  Dagstuhl--Leibniz-Zentrum f{\"u}r Informatik, 2020.
\newblock URL: \url{https://drops.dagstuhl.de/opus/volltexte/2020/12883}, \href
  {https://doi.org/10.4230/LIPIcs.ESA.2020.17}
  {\path{doi:10.4230/LIPIcs.ESA.2020.17}}.

\bibitem{BHRS10}
M.~Broom, C.~Hadjichrysanthou, J.~Rycht{\'a}{\v r}, and B.~T. Stadler.
\newblock Two results on evolutionary processes on general non-directed graphs.
\newblock {\em Proceedings of the Royal Society of London A: Mathematical,
  Physical and Engineering Sciences}, 466(2121):2795--2798, 2010.
\newblock \href {https://doi.org/10.1098/rspa.2010.0067}
  {\path{doi:10.1098/rspa.2010.0067}}.

\bibitem{DGMRSS14}
J.~D{\'\i}az, L.~A. Goldberg, G.~B. Mertzios, D.~Richerby, M.~J. Serna, and
  P.~G. Spirakis.
\newblock Approximating fixation probabilities in the generalized {M}oran
  process.
\newblock {\em Algorithmica}, 69(1):78--91, 2014.

\bibitem{DGRS16}
J.~D{\'{\i}}az, L.~A. Goldberg, D.~Richerby, and M.~J. Serna.
\newblock Absorption time of the moran process.
\newblock {\em Random Structures \& Algorithms}, 49(1):137--159, 2016.
\newblock \href {https://doi.org/10.1002/rsa.20617}
  {\path{doi:10.1002/rsa.20617}}.

\bibitem{DBLP:conf/ppsn/DoerrG10a}
B.~Doerr and L.~A. Goldberg.
\newblock Adaptive drift analysis.
\newblock {\em Algorithmica}, 65(1):224--250, 2013.
\newblock \href {https://doi.org/10.1007/s00453-011-9585-3}
  {\path{doi:10.1007/s00453-011-9585-3}}.

\bibitem{DBLP:conf/gecco/DoerrJW10}
B.~Doerr, D.~Johannsen, and C.~Winzen.
\newblock Multiplicative drift analysis.
\newblock {\em Algorithmica}, 64(4):673--697, 2012.
\newblock \href {https://doi.org/10.1007/s00453-012-9622-x}
  {\path{doi:10.1007/s00453-012-9622-x}}.

\bibitem{DBLP:conf/gecco/Droste04}
Stefan Droste.
\newblock Analysis of the {(1+1)} {EA} for a noisy onemax.
\newblock In {\em Proc.\ of GECCO'04}, pages 1088--1099. Springer, 2004.
\newblock \href {https://doi.org/10.1007/978-3-540-24854-5\_107}
  {\path{doi:10.1007/978-3-540-24854-5\_107}}.

\bibitem{DBLP:journals/algorithmica/GiessenK16}
Christian Gie{\ss}en and Timo K{\"{o}}tzing.
\newblock Robustness of populations in stochastic environments.
\newblock {\em Algorithmica}, 75(3):462--489, 2016.
\newblock \href {https://doi.org/10.1007/s00453-015-0072-0}
  {\path{doi:10.1007/s00453-015-0072-0}}.

\bibitem{DBLP:journals/corr/abs-1804-02293}
Leslie~Ann Goldberg, John Lapinskas, and David Richerby.
\newblock Phase transitions of the {M}oran process and algorithmic
  consequences.
\newblock {\em Random Structures \& Algorithms}, 56(3):597--647, 2020.
\newblock \href {https://doi.org/https://doi.org/10.1002/rsa.20890}
  {\path{doi:https://doi.org/10.1002/rsa.20890}}.

\bibitem{hajek1982hitting}
B.~Hajek.
\newblock Hitting-time and occupation-time bounds implied by drift analysis
  with applications.
\newblock {\em Advances in Applied probability}, 14(3):502--525, 1982.

\bibitem{DBLP:journals/ai/HeY01}
J.~He and X.~Yao.
\newblock Drift analysis and average time complexity of evolutionary
  algorithms.
\newblock {\em Artificial Intelligence}, 127(1):57--85, 2001.
\newblock \href {https://doi.org/10.1016/S0004-3702(01)00058-3}
  {\path{doi:10.1016/S0004-3702(01)00058-3}}.

\bibitem{DBLP:journals/nc/HeY04}
J.~He and X.~Yao.
\newblock A study of drift analysis for estimating computation time of
  evolutionary algorithms.
\newblock {\em Natural Computing}, 3(1):21--35, 2004.
\newblock \href {https://doi.org/10.1023/B:NACO.0000023417.31393.c7}
  {\path{doi:10.1023/B:NACO.0000023417.31393.c7}}.

\bibitem{Johannsen10}
D.~Johannsen.
\newblock {\em Random Combinatorial Structures and Randomized Search
  Heuristics.}
\newblock PhD thesis, Universität des Saarlandes, 2010.
\newblock URL:
  \url{http://scidok.sulb.uni-saarland.de/volltexte/2011/3529/pdf/Dissertation
  3166 Joha Dani 2010.pdf}.

\bibitem{10.1145/3212734.3212788}
Adrian Kosowski and Przemyslaw Uznanski.
\newblock Brief announcement: Population protocols are fast.
\newblock In {\em Proc.\ of PODC'18}, page 475–477. ACM, 2018.
\newblock \href {https://doi.org/10.1145/3212734.3212788}
  {\path{doi:10.1145/3212734.3212788}}.

\bibitem{DBLP:journals/algorithmica/Kotzing16}
T.~K{\"{o}}tzing.
\newblock Concentration of first hitting times under additive drift.
\newblock {\em Algorithmica}, 75(3):490--506, 2016.
\newblock \href {https://doi.org/10.1007/s00453-015-0048-0}
  {\path{doi:10.1007/s00453-015-0048-0}}.

\bibitem{KotzingLW15}
T.~K{\"{o}}tzing, A.~Lissovoi, and C.~Witt.
\newblock {(1+1)} {EA} on generalized dynamic {OneMax}.
\newblock In {\em Proc.\ of FOGA'15}, pages 40--51, 2015.
\newblock \href {https://doi.org/10.1145/2725494.2725502}
  {\path{doi:10.1145/2725494.2725502}}.

\bibitem{KoetzingK18FiniteStateSpaces}
Timo Kötzing and Martin~S. Krejca.
\newblock First-hitting times for finite state spaces.
\newblock In {\em Proc. of PPSN'18}, pages 79--91, 2018.
\newblock \href {https://doi.org/10.1007/978-3-319-99259-4_7}
  {\path{doi:10.1007/978-3-319-99259-4_7}}.

\bibitem{KoetzingK19GeneralDriftTheorems}
Timo Kötzing and Martin~S. Krejca.
\newblock First-hitting times under drift.
\newblock {\em Theoretical Computer Science}, 796:51--69, 2019.
\newblock \href {https://doi.org/10.1016/j.tcs.2019.08.021}
  {\path{doi:10.1016/j.tcs.2019.08.021}}.

\bibitem{Lehre2014}
P.~K. Lehre and C.~Witt.
\newblock Concentrated hitting times of randomized search heuristics with
  variable drift.
\newblock In {\em Proc.\ of ISAAC'14}, pages 686--697. Springer, 2014.
\newblock \href {https://doi.org/10.1007/978-3-319-13075-0_54}
  {\path{doi:10.1007/978-3-319-13075-0_54}}.

\bibitem{Lengler20DriftBookChapter}
Johannes Lengler.
\newblock Drift analysis.
\newblock In Benjamin Doerr and Frank Neumann, editors, {\em Theory of
  Evolutionary Computation: Recent Developments in Discrete Optimization},
  Natural Computing Series, chapter~2, pages 89--131. Springer, 2020.
\newblock \href {https://doi.org/10.1007/978-3-030-29414-4_2}
  {\path{doi:10.1007/978-3-030-29414-4_2}}.

\bibitem{LHN2005:EvoDyn}
E.~Lieberman, C.~Hauert, and M.~A. Nowak.
\newblock Evolutionary dynamics on graphs.
\newblock {\em Nature}, 433(7023):312--316, 2005.
\newblock Supplementary material available at
  \url{http://www.nature.com/nature/journal/v433/n7023/full/nature03204.html}.

\bibitem{McD93}
C.~McDiarmid.
\newblock A random recolouring method for graphs and hypergraphs.
\newblock {\em Combinatorics, Probability and Computing}, 2(3):363–365, 1993.
\newblock \href {https://doi.org/10.1017/S0963548300000730}
  {\path{doi:10.1017/S0963548300000730}}.

\bibitem{DBLP:journals/ijicc/MitavskiyRC09}
B.~Mitavskiy, J.~E. Rowe, and C.~Cannings.
\newblock Theoretical analysis of local search strategies to optimize network
  communication subject to preserving the total number of links.
\newblock {\em International Journal of Intelligent Computing and Cybernetics},
  2(2):243--284, 2009.
\newblock \href {https://doi.org/10.1108/17563780910959893}
  {\path{doi:10.1108/17563780910959893}}.

\bibitem{MU05}
M.~Mitzenmacher and E.~Upfal.
\newblock {\em Probability and computing - randomized algorithms and
  probabilistic analysis}.
\newblock Cambridge University Press, 2005.

\bibitem{Mor1958:Moran}
P.~A.~P. Moran.
\newblock Random processes in genetics.
\newblock {\em Proceedings of the Cambridge Philosophical Society},
  54(1):60--71, 1958.

\bibitem{OlivetoW11}
P.~S. Oliveto and C.~Witt.
\newblock Simplified drift analysis for proving lower bounds in~evolutionary
  computation.
\newblock {\em Algorithmica}, 59(3):369--386, 2011.
\newblock \href {https://doi.org/10.1007/s00453-010-9387-z}
  {\path{doi:10.1007/s00453-010-9387-z}}.

\bibitem{OlivetoW12}
P.~S. Oliveto and C.~Witt.
\newblock Erratum: Simplified drift analysis for proving lower bounds in
  evolutionary computation.
\newblock {\em CoRR}, abs/1211.7184, 2012.
\newblock URL: \url{http://arxiv.org/abs/1211.7184}.

\bibitem{P91}
C.~H. Papadimitriou.
\newblock On selecting a satisfying truth assignment (extended abstract).
\newblock In {\em 32nd Annual Symposium on Foundations of Computer Science},
  pages 163--169, 1991.
\newblock \href {https://doi.org/10.1109/SFCS.1991.185365}
  {\path{doi:10.1109/SFCS.1991.185365}}.

\bibitem{STW05}
J.~Scharnow, K.~Tinnefeld, and I.~Wegener.
\newblock The analysis of evolutionary algorithms on sorting and shortest paths
  problems.
\newblock {\em Journal of Mathematical Modelling and Algorithms}, pages
  349--366, 2005.
\newblock \href {https://doi.org/10.1007/s10852-005-2584-0}
  {\path{doi:10.1007/s10852-005-2584-0}}.

\bibitem{TIN06}
C.~Taylor, Y.~Iwasa, and M.~A. Nowak.
\newblock A symmetry of fixation times in evoultionary dynamics.
\newblock {\em Journal of Theoretical Biology}, 243(2):245 -- 251, 2006.
\newblock \href {https://doi.org/http://dx.doi.org/10.1016/j.jtbi.2006.06.016}
  {\path{doi:http://dx.doi.org/10.1016/j.jtbi.2006.06.016}}.

\bibitem{Witt13MultiLowerBound}
C.~Witt.
\newblock Tight bounds on the optimization time of a randomized search
  heuristic on linear functions.
\newblock {\em Combinatorics, Probability {\&} Computing}, 22(2):294--318,
  2013.
\newblock \href {https://doi.org/10.1017/S0963548312000600}
  {\path{doi:10.1017/S0963548312000600}}.

\end{thebibliography}

\newpage
\appendix
\section{Formal Explanations of our Simplifications}
\label{sec:appendix}

We explain in more detail what our simplifications in the article mean from a more formal point of view.

\subparagraph{Conditioning on Events}
We condition on events where a random variable~$X$ takes on a universally quantified value~$s$.
In this quantification, we do not restrict the domain of~$s$ to only such values where the event $\{X = s\}$ has a positive probability, although this results in many undefined cases.
We use no restrictions on~$s$ in order to keep the notation shorter.
However, what we mean is to only consider such values of~$s$ where $\Pr{X = s} > 0$.

\subparagraph{Integrability}
A random variable~$X$ is \emph{integrable} if and only if $\E{|X|} < \infty$.
In general, a random process $(\randomProcess[\timePoint])_{\timePoint \in \Na}$ is \emph{integrable} if and only if, for all $\timePoint \in \Na$, it holds that \randomProcess[\timePoint] is integrable.

\subparagraph{Markov Chains}
A random process $(\randomProcess[\timePoint])_{\timePoint \in \Na}$ is a \emph{Markov chain} if and only if the outcome of each next step only depends on the current state and time point.
Formally, for all $\timePoint \in \Na$ as well as all $s \in \Re$ and all $\boldsymbol{v} \in \Re^\timePoint$, it holds that $\Pr{\randomProcess[\timePoint + 1] = s}[\randomProcess[\timePoint] = \boldsymbol{v}_\timePoint] = \Pr{\randomProcess[\timePoint + 1] = s}[\forall \timePoint' \in [0, \timePoint] \cap \Na\colon \randomProcess[\timePoint'] = \boldsymbol{v}_{\timePoint'}]$.

\subparagraph{Quantification over Random Variables}
We state inequalities that only need to hold for points in time when a random process did not reach its target yet.
Formally, let~\firstHittingTime be a random variable over $\Na \cup \{\infty\}$, let~$X$ and~$Y$ be random variables over~$\Re$, and for any event~$A$, let $\boldsymbol{1}\{A\}$ denote the indicator function for~$A$.
Note that~$\boldsymbol{1}\{A\}$ is a random variable.
Further, let~$\sim$ denote a relation symbol, such as $=$, $\leq$, or $\geq$.
We define the phrase ``for all $\timePoint < \firstHittingTime$, it holds that $X \sim Y$'' to be equivalent to ``for all $\timePoint \in \Na$, it holds that $X \cdot \boldsymbol{1}\{\timePoint < \firstHittingTime\} \sim Y \cdot \boldsymbol{1}\{\timePoint < \firstHittingTime\}$''.

\end{document}